\newtheorem{theorem}{\bf Theorem}[section]
\newtheorem{remark}{\bf Remark}[section]
\newtheorem{lemma}{\bf Lemma}[section]
\newtheorem{definitions}{\bf Definition}[section]
\numberwithin{equation}{section}
\title[Risk-sensitive control of reflected diffusion Process]
{Risk-sensitive control of reflected diffusion Processes on Orthrant}
\author{Sunil Kumar Gauttam,  K. Suresh Kumar and Chandan Pal }
\address{Department of
Mathematics, The LNM Institute of Information Technology , Jaipur-302031
, Tel no. +91 141 519 1800
, Fax no. +91 141 518 9214, India.}
\address{Department of
Mathematics, Indian Institute of Technology Bombay, Mumbai -
400076, Tel no. +91 22 2576 7489, Fax no. +91 22 2572 3480, India. }
\address{ Department of Mathematics, Indian Institute of Science, Bangalore-560012, Tel no. +91 80 2293 3210, Fax no. +91 80 2360 0146, India. }
\email{sgauttam@lnmiit.ac.in, suresh@math.iitb.ac.in, chandan14@math.iisc.ernet.in }
\begin{document}
\maketitle

\begin{abstract}
\noindent In this article, we prove the existence of optimal risk-sensitive control with state
constraints. 
We use near monotone assumption  on the running cost to
prove the existence of optimal  risk-sensitive control.
\end{abstract}

\vspace{10mm}

\noindent
 {\it Key words:} Risk sensitive control, discounted risk-sensitive control, 
diffusion in the orthrant,

\vspace{5mm}
\noindent
{\it 2000 Mathematics Subject Classification.} Primary 93E20; secondary 60J70

\section{Introduction and Problem Description}\label{PD}
\setcounter{equation}{0}
In this paper we study the risk-sensitive control problem when the state dynamics 
is governed by a controlled reflecting stochastic differential equation in $d$-dimentional orthant. 
We prove that the risk-sensitive value is an eigenvalue of the nonlinear eigenvalue problem with oblique
boundary conditions (see, the equation 
(\ref{ergodic}) ) which is the so called Hamilton Jacobi Bellman (HJB) equation of the risk-sensitive 
control problem with state constraints. We also show that any minimizing selector in 
(\ref{ergodic})   
corresponding to the eigen function of the risk-sensitive value is a risk-sensitive optimal control.  
We use near monotone 
structural condition on the running cost and a blanket recurrence condition for the state dynamics
for proving this result.

The paper is organized as follows. The remaining part of Section 1 contains the 
detailed description of the problem and some results on controlled reflected stochastic differential
equations which are used in  subsequent sections. In Section 2, we discuss an auxillary risk-sensitive
control problem with discounted cost structure. We prove the existence of optimal value and 
control without the structural condition near monotonicity on the running cost. In the  final section,
we prove our main theorem, i.e. Theorem 3.2. The proof is based on the socalled vanishing 
discounting method. 

\vspace{.1in}

Let $U$ be a compact metric space and $D$ denote the positive orthrant of $\mathbb R^d$, i.e., 
$$D :=\{ x\in \mathbb R^d: x_i>0,\ \forall \ i=1,2,\cdots,d\}.$$ 
 Let $\overline{A}, \, \partial A$ denote the closure  and boundary of the set $A$, for any subset $A$ of $\mathbb{R}^d$ respectively. 
\paragraph{}  For the given functions $ b : \overline{D} \times U \longrightarrow \mathbb R^d,$ $
\sigma : \overline{D} \longrightarrow \mathbb R^{d\times d}$ and $\gamma : \mathbb R^d \longrightarrow \mathbb R^d $, consider the   controlled 
reflected diffusion in  $\overline { D}$, given by the solution of 
the reflected stochastic differential equation (in short RSDE)
\begin{eqnarray} \label{state}
\begin{array}{rcl}
\displaystyle{
 dX_t} &=& \displaystyle{ b(X_t,v_t)dt+\sigma(X_t) dW_t - \gamma(X_t)d\xi_t }, \\
\displaystyle{ d\xi_t }&=&  \displaystyle{  I_{\{X_t\in \partial {D} \}}d\xi_t  }, \\
\displaystyle{ \xi_0 } &=& \displaystyle{  0,\ \ \ \ X_0 = x\in \overline{D}, } 
\end{array}
\end{eqnarray}
where $W =(W_1,\cdots, W_d)$ is an $\mathbb R^d$-valued standard Wiener 
process, $v(\cdot)$ is a $U$-valued measurable process  non anticipative with respect to $W(\cdot)$,
called an admissible control. In fact the pair $(v(\cdot), W(\cdot))$ defined on a filtered probability 
space $(\Omega, \mathcal{F}, \{ \mathcal{F}_t \} , P)$ satisfying the usual hypothesis is an
 admissible control if and only if  $v(\cdot)$ is measurable and ${\{ \mathcal F}_t \}$-adapted, 
see Remark 2.1, p.31 of \cite{arapostathis_borkar_ghosh}.  Henceforth, all
filtered probability spaces are assumed to satisfy usual hypothesis.
The set of all admissible control is denoted by $\mathcal{A}$. 

 By a solution to (\ref{state})  we mean a pair of continuous time   processes 
 $(X(\cdot),\xi(\cdot))$ satisfying (\ref{state}) such that the process $X(\cdot)$ is 
$\overline { D}$-valued and  $\xi(\cdot)$ is 
a non-decreasing process which increases only when $X(\cdot)$ hits the boundary $\partial {D}$. 
The above is a special case of the more general definition of solutions of SDEs with reflection, 
see \cite{dupuis_ishii}. In fact we consider the  case when the direction of reflection is single valued.

We use the relaxed control frame work given as follows. The compact metric space $U = {\mathcal P}(S)$
for some compact metric space $S$, where ${\mathcal P}(S)$ denote the space of probability measures
on $S$ endowed with the Prohorov topology, i.e. the topology induced by weak convergence. The drift
coefficient $b$ takes the form 
\[
b(x, v) \ = \ \int_{S} \Bar{b}(x, s) v(ds) , \ v \in U, x \in \overline{D}.
\]
 \paragraph{} 
For $l = 1, 2, \cdots $, set
\[
D'_l \ = \ D \cap B(0, l), \ B(0, l) \ = \ \{ x \in \mathbb{R}^d| \|x \| < l \}.
\]
From the proof of Theorem A2 (ii) and the remark in  p. 28 of \cite{Chung} 
there exists open domains $D_{lm} \subseteq \mathbb{R}^d$  with $C^\infty$ boundary such that 
\begin{itemize}
\item The distance between $\partial D'_l$ and $D_{lm}$ satisfies, 
$$d ( D_{lm} , \partial D'_l) < \frac{1}{m}, \ l \geq 1,$$
\item $D_{ln} \subseteq D_{lm}, \ n \geq m , \ l \geq 1$.
\end{itemize}
Set  $$ D_m \ = \ \cup^\infty_{l =1} D_{lm},  \ m \geq 1.$$ Then we have 

\begin{itemize}
 \item[(i)]  For each $m \geq 1$, $D_m$ is with $C^\infty$ smooth boundary and 
$D_m \downarrow \Bar{D}.$
\item[(ii)] For any compact set $C\subset \Bar{D}$, we have $C\subset \overline{D}_{lm}$ for $m \geq 1$ 
and $l$ sufficiently large. 
\end{itemize}

We make the following assumption which is sufficient to  ensure the existence of unique
 solution to the equation (\ref{state})\\
{\bf (A1)}\ (i) The function $\Bar{b}$ is bounded continuous, Lipschitz continuous in its first
argument uniformly with respect to the second argument. \\
\ \ \ (ii) The functions $\sigma_{ij} \in C^2 (\Bar{D}), i, j = 1, \cdots , d $  and bounded. \\
\ \ \ (iii) The function $a\stackrel{def}{=} \sigma\sigma^\bot$ is uniformly elliptic with ellipticity constant $\delta$, i.e.,
\[
x  a(x) x^{\bot} \, \geq \, \delta \, |x|^2, \ x \in \overline{D} \, ,
\]
where $x^{\bot}$ denote the transpose of the vector $x$.\\
{\bf (A2)} (i) The function $\gamma   = (\gamma_1, \cdots , \gamma_d) $  is such that 
$\gamma_i \in C^2_b(\mathbb{R}^d)$, and there exists $\eta>0$ such that 
\begin{eqnarray*}\label{no_slip}
\displaystyle { \gamma(x)\cdot n_m(x) } &\geq& \displaystyle { \eta \ \mbox{ for all } x\in  \partial D_m,}
 \end{eqnarray*}
here $ n_m(\cdot)$ denote the outward normal to $ \partial D_m$. \\
(ii) There exists a symmetric matrix valued map $ M: \mathbb{R}^d \longrightarrow \mathbb{R}^d
\otimes \mathbb{R}^d , \, \mathbb{R}^d \otimes \mathbb{R}^d$ the set of all $d \times d$
real valued matrices with usual metric, such that $M = (m_{ij})$, $m_{ij} \in C_b(\mathbb{R}^d)\cap W^{2,\infty}(\mathbb{R}^d)$ for $i,j=1,2,\cdots,d$ and satisfies the following\\
(a) there exists $\delta_1$ such that
$$ x^{\bot} M x \geq \delta_1 \|x\|^2, x\in \mathbb{R}^d ;$$
(b) there exists $C_0 >0$ such that
$$  C_0 \|x-y\|^2+\sum_{i,j}^d m_{ij}(x)(x_i-y_i)\gamma_j(x)\geq 0, \; \mbox{for  all}\; x\in \partial D, y\in \overline{D} ;$$
(c) Let $z \in \overline{D}$ and if for some $C_0>0$
$$  C_0 \|x-y\|^2+\sum_{i,j}^d m_{ij}(x)(x_i-y_i)z_j(x)\geq 0, \; \mbox{for  all}\; x\in \partial D, y\in \overline{D} ;$$
then $z=\theta \gamma(x)$ for some $\theta >0$.\\

The existence
of a unique  weak solution of (\ref{state}) for an admissible control has been proved in [\cite{ghosh_suresh},\cite{lions_sznitman}] using the following programme. First establish the existence of unique
strong solution with zero drift as follows.
\begin{itemize}
\item Establish the existence of a solution to (\ref{state}) in the smooth domain $\overline{D}_m,
\, m \geq 1$,
\item  use convergence arguments to obtain a solution of (\ref{state}) in $\overline{D}$,
\item establish pathwise uniqueness, see Lemma 3.3 of \cite{Suresh_Bagchi}.
\end{itemize} 
Now with non zero drift, using Girsanov transformation method to establish existence of unique weak
solution under admissible controls, see [\cite{arapostathis_borkar_ghosh}, pp-42-44]. 
For a Markov control, one can prove the existence of unique strong
solution by adapting the approach by Zovokin and Veretenikov, 
see [ \cite{arapostathis_borkar_ghosh}, pp.45-46] for the analogous proof for the unconstrainted 
diffusions.  See Theorem 3.2 of \cite{Suresh_Bagchi} for details.

The running cost function $ r:\overline{ D }\times U\longrightarrow [0,\infty)$ is given 
in the relaxed frame work as 
\[
r(x, v) \ = \ \int_S \Bar{r}(x, s) v(ds),  x \in \overline{D}, v \in U.
\]

 Throughout this paper we assume that the cost function $\Bar{r}$ is  continuous in $(x,s)$ and  Lipschitz continuous in the first argument uniformly with respect to the second. 
We consider two risk-sensitive cost criteria, discounted cost and ergodic cost criteria which is described
below.

\subsection{Discounted cost criterion} Let $\theta \in (0, \ \Theta)$ be the risk-aversion parameter. In the $\alpha$-discounted cost criterion, controller chooses his control $v(\cdot)$ from the set of all admissible controls ${\mathcal A}$ to minimize his $\alpha$-discounted risk-sensitive cost given by 
\begin{equation}\label{discountedcost}
J^{v}_{\alpha}(\theta, x) \ := \ \dfrac{1}{\theta} \ln  E^{v}_x
\Big[ e^{\theta \int^{\infty}_0 e^{-\alpha t} r(X_t, v_t) dt} \Big] , x \in \overline{D}, 
\end{equation}
where $\alpha > 0$ is the discount parameter,  $X(\cdot)$ is the solution of the s.d.e. (\ref{state}) corresponding to $v(\cdot) \in  \mathcal{A}$ and $E^{v}_x$ denote the expectation with respect
to the law of the process (\ref{state}) corresponding to the admissible control $v$ with the initial condition $X_0 = x$.  An admissible control $v^*(\cdot)\in \mathcal A$ is called 
optimal control if
$$
J^{v^*}_{\alpha}(\theta, x) \leq J^{v}_{\alpha}(\theta, x), \ \ \mbox{for all} \ v(\cdot)\in \mathcal A \; \mbox{and } \; x \in \overline{D}.
$$

\subsection{Ergodic cost criterion}
In this criterion controller chooses his control $v (\cdot) \in {\mathcal A}$ so as to minimize his risk-sensitive
accumulated cost given by
\begin{equation}\label{risksensitivecost}
\rho^{v} (\theta, x) \ = \limsup_{T \to \infty} \frac{1}{\theta T} \ln E^{v}_x
\Big[ e^{\theta \int^T_0 r(X_t, v_t) dt} \Big] , x \in \overline{D}.
\end{equation}
The definition of  optimal control is analogous. From now onwards, we take $\Theta = 1$ without any
loss of generality. 

\subsection{ Various subclasses of controls}An admissible control $v(\cdot)$ is said to be  a Markov control if there exists a measurable map $\bar{v}:[0,\infty)\times \overline{D} \longrightarrow U$ such that $v(t)=\bar{v}(t,X(t))$. By an abuse of notation, the measurable map  $\bar{v}:[0,\infty)\times \overline{D} \longrightarrow U$, itself is called Markov control. If $\bar{v}$ has no explicit time dependence then it is said to be a stationary Markov control. We denote the set of all Markov control and stationary Markov control by $\mathcal{M}$ and $\mathcal{S}$ respectively. An admissible control $v(\cdot)$ is said to be 
a feedback control if it is progressively measurable with respect to $\{{\mathcal F}^{X, \xi}_t \}$, where $(X(\cdot), \xi(\cdot)) $ denote the solution of (\ref{state}) and 
${\mathcal F}^{X, \xi}_t $  denote sigma field generated by  $\{X_s, \xi_s |  s \leq t \}, t \geq 0$.
 This is equivalent to saying that there exists a progressively measurable map 
$\Bar{v} : [0, \ \infty) \times C[ [0, \infty) :\Bar{D}) \times C[ [0, \infty) :\Bar{D}) \to U$ 
such that $v(t) \ = \ \Bar{v}(t, X[0, t], \xi [0, t]), t \geq 0$, where $X[0, t], \xi[0, t] $ denote 
respectively  $\{X_s , 0 \leq s \leq t \}, \{\xi_s , 0 \leq s \leq t \}$. 
Hence by an abuse of notation, we denote the set of feedback controls by all progressively measurable
maps. The following lemma tells that we can restrict ourselves to feedback  controls. Its proof is a straightforward adaptation of Theorem 2.3.4 (a), p.52 of \cite{arapostathis_borkar_ghosh}.
\begin{lemma} Let $(v(\cdot), W(\cdot))$ be an admissible control and $(X(\cdot), \xi(\cdot))$ be a 
solution pair to (\ref{state}) on a filtered probability space $(\Omega, \mathcal {F}, \{\mathcal{F}_t\}, P)$. Then on an augmentation $(\tilde{\Omega}, \tilde{\mathcal{F}}, 
\{\tilde{\mathcal{F}}_t\}, \tilde{P})$ with a $\{ \tilde{\mathcal{F}}_t\}$-Wiener process $\tilde{W}(\cdot)$
and a feedback control $\tilde{v}(\cdot)$ such that $(X(\cdot), \xi(\cdot))$ solves (\ref{state})
for the pair $(\tilde{v}(\cdot), \tilde{W}(\cdot))$ on 
$(\tilde{\Omega}, \tilde{\mathcal{F}}, \{\tilde{\mathcal{F}}_t\}, \tilde{P})$.
\end{lemma}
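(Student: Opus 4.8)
The plan is to obtain $\tilde v$ by projecting the given relaxed control onto the observation filtration $\{\mathcal F^{X,\xi}_t\}$ and to exhibit the required driving noise $\tilde W$ as the associated innovations process. Everything rests on the fact that the relaxed drift $b(x,v)=\int_S\bar b(x,s)\,v(ds)$ is affine in the control variable $v\in U=\mathcal P(S)$, so that conditioning commutes with the drift. Since $v_t$ takes values in the convex set $\mathcal P(S)$, for each $t$ the conditional expectation
\[
\tilde v_t \ := \ E^v_x\bigl[\,v_t \,\big|\, \mathcal F^{X,\xi}_t\,\bigr]
\]
is again a $\mathcal P(S)$-valued random variable, characterized by $\int_S f\,d\tilde v_t = E^v_x[\int_S f\,dv_t\mid \mathcal F^{X,\xi}_t]$ a.s. for every $f\in C(S)$. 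A regular conditional probability together with a measurable selection argument then realizes $\{\tilde v_t\}$ as a progressively measurable functional $\tilde v(t,X[0,t],\xi[0,t])$, that is, a feedback control in the sense defined above.

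Because $X_t$ is $\mathcal F^{X,\xi}_t$-measurable and $b(x,\cdot)$ is affine, the projection above yields the key identity
\[
E^v_x\bigl[\,b(X_t,v_t)\,\big|\,\mathcal F^{X,\xi}_t\,\bigr] \ = \ b(X_t,\tilde v_t)\qquad\text{a.s.}
\]
Next I pass to an augmentation $(\tilde\Omega,\tilde{\mathcal F},\{\tilde{\mathcal F}_t\},\tilde P)$ in which $\{\tilde{\mathcal F}_t\}$ is the completed, right-continuous version of $\{\mathcal F^{X,\xi}_t\}$. By \textbf{(A1)}(iii) the matrix $\sigma(x)$ is invertible with bounded inverse, so I may define
\[
\tilde W_t \ := \ W_t + \int_0^t \sigma^{-1}(X_s)\bigl[b(X_s,v_s)-b(X_s,\tilde v_s)\bigr]\,ds .
\]
Substituting (\ref{state}) shows at once that $\tilde W_t = \int_0^t \sigma^{-1}(X_s)\bigl[\,dX_s - b(X_s,\tilde v_s)\,ds + \gamma(X_s)\,d\xi_s\,\bigr]$, so $\tilde W$ is $\{\tilde{\mathcal F}_t\}$-adapted and, by construction, $(X(\cdot),\xi(\cdot))$ solves (\ref{state}) driven by the pair $(\tilde v(\cdot),\tilde W(\cdot))$.

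It then remains to verify that $\tilde W$ is a standard $\{\tilde{\mathcal F}_t\}$-Wiener process via L\'evy's characterization. Its quadratic covariation coincides with that of $W$, namely $\langle \tilde W^i,\tilde W^j\rangle_t=\delta_{ij}t$, because the added term has finite variation. For the martingale property I fix $s<t$ and $A\in\mathcal F^{X,\xi}_s$ and split $E[(\tilde W_t-\tilde W_s)\mathbf 1_A]$ into the increment of $W$, which vanishes since $A\in\mathcal F_s$ and $W$ is an $\{\mathcal F_t\}$-Wiener process, and the drift-defect integral, which vanishes termwise by Fubini, the tower property, and the identity of the preceding paragraph, using that $\sigma^{-1}(X_u)\mathbf 1_A$ is $\mathcal F^{X,\xi}_u$-measurable for $u\ge s$. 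Hence $\tilde W$ is a continuous $\{\tilde{\mathcal F}_t\}$-martingale with the correct bracket, and L\'evy's theorem makes it a Wiener process.

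The main obstacle is this last step, combined with the measurable representation: establishing the martingale property of the innovations process $\tilde W$ with respect to the \emph{observation} filtration $\{\mathcal F^{X,\xi}_t\}$, rather than the original $\{\mathcal F_t\}$, is precisely the filtering-type computation that the affine identity is designed to drive, and some care is needed to realize the conditional law $\tilde v_t$ as a genuine progressively measurable feedback control.
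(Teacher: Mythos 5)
Your proof is correct and follows essentially the same route the paper intends: the paper disposes of this lemma by citing Theorem 2.3.4(a) of Arapostathis--Borkar--Ghosh, whose argument is exactly your projection-plus-innovations scheme (conditional expectation of the relaxed control given $\{\mathcal F^{X,\xi}_t\}$, the affine drift identity from the relaxed-control structure, and L\'evy's characterization of the corrected noise), with the ``straightforward adaptation'' consisting precisely of carrying the reflection term $\gamma(X_s)\,d\xi_s$ through the observation filtration, which your construction does. The only detail you should cite rather than gesture at is the jointly (progressively) measurable version of $\tilde v_t$ --- take the optional projection of $v$ onto $\{\mathcal F^{X,\xi}_t\}$ and then invoke the functional representation lemma for processes adapted to a natural filtration (ABG, Lemma 2.3.3), since choosing $E[v_t\mid\mathcal F^{X,\xi}_t]$ separately for each $t$ does not by itself give a feedback control --- and it is worth noting that your direct definition of $\tilde W$ via $\sigma^{-1}$ is legitimate here exactly because (A1)(iii) makes $\sigma$ square and uniformly invertible, whereas the cited theorem's proof must route through a martingale representation on an extension to cover the general case.
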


\subsection{Properties of Controlled RSDEs} We prove some results  about the controlled RSDE
(\ref{state}) which are used in the subsequent sections. To the best of our knowledge these results
are not available the controlled RSDEs we are considering. 

First result is about the equivalence of waek  solution and martingale problem for reflected diffusions. \\
For a feedback  control $v(\cdot)$, we say that the RSDE (\ref{state}) admits a weak solution if 
there exists a filtered probability space $(\Omega, {\mathcal F}, \{\mathcal{F}_t \}, P)$, a
$\{\mathcal {F}_t\}$-Wiener  process $W(\cdot)$ and a pair of $\{\mathcal {F}_t\}$-adapted 
processes $(X(\cdot), \xi(\cdot))$  with a.s. continuous paths such that $X(\cdot)$ is $\overline{D}$-valued,
$\xi(\cdot)$ is non decreasing and satisfy
\begin{eqnarray*}
d X(t) & = & b(X(t), v(t, X[0, t], \xi[0, t]) dt + \sigma(X(t) dW(t) - \gamma(X(t)) d \xi(t)\\
d \xi(t) & = & I_{\{ X(t) \in \partial D \}}d \xi(t), X(0) =x, \xi(0) =0 \ P \ {\rm a.s.} \, .
\end{eqnarray*}
Set
\begin{equation}\label{domain}
{\mathcal H} \ = \ \{ f \in C^2_0 (\overline{D}) | \nabla f \cdot \gamma \geq 0 \ {\rm on} \
\partial D \}
\end{equation}
and 
\begin{equation}\label{generator}
\mathcal{L} f(x, v) \ = \ b(x, v) \cdot \nabla f (x) + \frac{1}{2} {\rm trace}( a(x) \nabla^2 f (x)) ,
f \in \mathcal{D}(\mathcal{L}),
\end{equation}
where the domain $\mathcal{D}(\mathcal{L})$ of the oblique elliptic operator $\mathcal{L}$ 
contains $C^2_{b, \gamma}(\overline{D})$,
the set of all bounded twice continuously differentiable functions satisfying 
$\nabla f \cdot \gamma \geq 0$ on $\partial D$.\\

\noindent {\bf Constrained controlled martingale problem:} A pair of $\{ \mathcal {F}_t\}$-adapted processes $(X(\cdot), \xi(\cdot))$ 
defined on a filtered probability space $(\Omega, {\mathcal F}, \{\mathcal{F}_t \}, P)$
is said solve the constrained controlled martingale problem to  the RSDE (\ref{state}) corresponding
to the admissible control $v(\cdot)$ and initial condition $x \in \overline{D}$ if the following holds.
\begin{itemize}
\item [{(i)}] $X(\cdot)$ is $\overline{D}$-valued and $\xi(\cdot)$ is non decreasing and $X(0) = x, 
\xi(0) =0 $ a.s.
\item [{(ii)}] 
\[
\int^t_0 I_{\{ X(s) \in \partial D \} } d \xi(s) = \xi(t), \ P \ {\rm a.s. \ for\ all} \  t \geq  0,
\]
\item [{(iii)}] For all $ f \in \mathcal {H}$,
\[
M_f (t) \ = \ f(X(t)) - \int^t_0 \mathcal {L} (X(s), v(s)) ds + \int^t_0 \nabla f \cdot \gamma(X(s)) d \xi (s),
\ t \geq 0
\]
is an $\{ \mathcal{F}_t\}$-martingale in $(\Omega, \mathcal{F}, P)$.
\end{itemize}
\begin{theorem} For a feedback  control $v(\cdot)$, the pair of processes $(X(\cdot), \xi(\cdot))$ 
defined on a filtered probability space $(\Omega, {\mathcal F}, \{\mathcal{F}_t \}, P)$ solves the 
constrained controlled martingale problem iff there exists a filtered probability space \\   
$(\tilde{\Omega}, \tilde{{\mathcal F}}, \{ \tilde{\mathcal{F}}_t \}, \tilde{P})$ and a pair of 
processes $(\tilde{X}(\cdot), \tilde{\xi}(\cdot))$ which is a weak solution to (\ref{state}) 
such that $(X(\cdot), \xi(\cdot))$  and $(\tilde{X}(\cdot), \tilde{\xi}(\cdot))$ agree in law.
\end{theorem}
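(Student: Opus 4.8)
The plan is to prove the two implications separately; the ``agree in law'' clause will come out automatically on each side.

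For the \emph{only if} direction (weak solution $\Rightarrow$ martingale problem), I would start from a weak solution $(\tilde X, \tilde\xi)$ with driving Wiener process $\tilde W$ and, for any $f \in \mathcal H \subseteq C^2_0(\overline D)$, apply It\^o's formula to $f(\tilde X(t))$, substituting the dynamics $d\tilde X = b\,dt + \sigma\,d\tilde W - \gamma\,d\tilde\xi$. After collecting terms one is left with $f(\tilde X(t)) - f(x) - \int_0^t \mathcal L f(\tilde X(s),v(s))\,ds + \int_0^t \nabla f\cdot\gamma(\tilde X(s))\,d\tilde\xi(s) = \int_0^t (\sigma^\top\nabla f)(\tilde X(s))\cdot d\tilde W(s)$, and the right-hand side is an $\{\tilde{\mathcal F}_t\}$-martingale because $\nabla f$ and $\sigma$ are bounded. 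Thus $M_f$ is a martingale and $(\tilde X,\tilde\xi)$ solves the constrained controlled martingale problem. Since every defining condition of that problem is a property of the joint law of the pair, any $(X,\xi)$ agreeing in law with $(\tilde X,\tilde\xi)$ is again a solution.

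For the \emph{if} direction, suppose $(X,\xi)$ solves the constrained controlled martingale problem on $(\Omega,\mathcal F,\{\mathcal F_t\},P)$. The goal is to produce, after augmenting the space, a Wiener process $W$ for which (\ref{state}) holds; then $(\tilde X,\tilde\xi):=(X,\xi)$ trivially agrees in law with $(X,\xi)$ and we are done. The first task is to upgrade the martingale property from compactly supported test functions to the coordinate maps $x\mapsto x_i$ and $x\mapsto x_i x_j$, which do not lie in $C^2_0(\overline D)$. I would localize with the exit times $\tau_n = \inf\{t:\|X(t)\|\geq n\}$ and smooth cutoffs equal to one on $B(0,n)$, so that the stopped processes only feel the test functions where they agree with the coordinate or quadratic maps. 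Testing $x_i$ then yields the semimartingale decomposition $X_i(t) = x_i + \int_0^t b_i(X(s),v(s))\,ds - \int_0^t \gamma_i(X(s))\,d\xi(s) + N_i(t)$ with $N=(N_1,\dots,N_d)$ a continuous local martingale, while testing $x_i x_j$ and using the product rule identifies $\langle N_i,N_j\rangle_t = \int_0^t a_{ij}(X(s))\,ds$. Because $a=\sigma\sigma^\top$ is uniformly elliptic by (A1)(iii), $\sigma(X(s))$ is invertible, so I can set $W(t)=\int_0^t \sigma^{-1}(X(s))\,dN(s)$ (adjoining an independent Brownian motion if the representation theorem requires it, which is the role of the augmentation); then $\langle W_i,W_j\rangle_t=\delta_{ij}t$ and L\'evy's characterization shows $W$ is a standard Wiener process with $dN=\sigma(X)\,dW$. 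Substituting back gives (\ref{state}), and condition (ii) of the martingale problem is exactly the statement that $\xi$ grows only on $\partial D$, so $(X,\xi,W)$ is a weak solution.

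I expect the boundary analysis in the \emph{if} direction to be the main obstacle. The class $\mathcal H$ contains only functions with $\nabla f\cdot\gamma\geq 0$ on $\partial D$, so the martingale property cannot be applied freely to both $f$ and $-f$, nor directly to the coordinate maps $x_i$, whose oblique derivative $\gamma_i$ carries no definite sign; extracting the reflection term $\int_0^t\gamma_i\,d\xi$ with the correct signed contribution is therefore not automatic. This is where assumption (A2) enters: following the Stroock--Varadhan submartingale approach, the symmetric matrix field $M$ of (A2)(ii), through quadratic test functions of the form $x\mapsto (x-y)^\top M(x)(x-y)$ whose oblique derivatives are controlled by conditions (b)--(c), supplies enough admissible members of $\mathcal H$ to pin down both the reflection term and the single-valuedness of the reflection direction. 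Carrying out this boundary identification rigorously, rather than the routine interior computation, is the crux of the argument.
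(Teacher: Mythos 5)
Your It\^o direction coincides with the paper's one-line treatment (``Converse follows from It\^o's formula''), and your remark that the constrained controlled martingale problem is a property of the joint law of $(X,\xi)$ correctly justifies the transfer between the two spaces (note, though, that you have swapped the labels: weak solution $\Rightarrow$ martingale problem is the \emph{if} direction of the statement as written). The substantive issue is the other direction, where your proposal and the paper genuinely diverge --- and where your proposal has a gap. You attempt the classical Stroock--Varadhan/Karatzas--Shreve reconstruction of the driving Wiener process, but you explicitly leave unproved the step you yourself call the crux: extending the martingale property from the class $\mathcal H$, with its one-sided condition $\nabla f\cdot\gamma\geq 0$ on $\partial D$, to the localized coordinate maps. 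Moreover, the remedy you point to is the wrong tool: the quadratic test functions built from the matrix $M$ of (A2)(ii), via conditions (b)--(c), are tailored to uniqueness arguments in the Dupuis--Ishii/Lions--Sznitman style, not to producing a semimartingale decomposition. What would actually rescue your scheme is an observation you missed: unlike the Stroock--Varadhan \emph{sub}martingale problem, the formulation here carries the compensator $\int_0^t\nabla f\cdot\gamma(X(s))\,d\xi(s)$ inside $M_f$, so $f\mapsto M_f$ is linear; it then suffices to write an arbitrary $f\in C^2_0(\overline D)$ as a difference of elements of $\mathcal H$, e.g.\ $f=(f+Cg)-Cg$ with $g\in C^2_0(\overline D)$ satisfying $\nabla g\cdot\gamma\geq 1$ on $\partial D\cap\operatorname{supp} f$ and $C$ a bound on $|\nabla f\cdot\gamma|$ there --- such $g$ should exist by the uniform obliqueness in (A2)(i), though constructing it at the edges and corners of the orthant requires care. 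As it stands, your proof of this direction is a program, not a proof.

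The paper avoids this reconstruction altogether: from the constrained controlled martingale problem it observes that the law of $X(\cdot)$ solves the corresponding Stroock--Varadhan submartingale problem (for $f\in\mathcal H$ the compensator is nondecreasing), invokes Theorem 1 of \cite{Kavithaetal2014} (Kang--Ramanan) as a black box to produce, on some filtered space, a weak solution $(\tilde X,\tilde\xi)$ of (\ref{state}) whose $X$-marginal has the same law, and then --- a step absent from your sketch but unavoidable on the paper's route, since Kang--Ramanan only matches the law of $X$ --- uses weak uniqueness of (\ref{state}) to upgrade equality of the laws of $X$ to equality of the laws of the pairs $(X,\xi)$. Your route, if the test-function extension above were carried out, would construct $W$ on an augmentation of the \emph{original} space, so the pair-law clause would hold trivially and no uniqueness appeal would be needed; that would be a more self-contained argument than the paper's citation-based one. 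But until that extension step is actually supplied, the gap remains.
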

\begin{proof} Suppose  $(X(\cdot), \xi(\cdot))$ solves the constrained controlled martingale problem.
Hence the law  of $X(\cdot)$ solves the corresponding submartingale problem. Now using Theorem 1 of \cite{Kavithaetal2014}, there exists a filtered probability space 
$(\tilde{\Omega}, \tilde{\mathcal {F}}, \{ \tilde{ \mathcal{F}}_t \}, \tilde{P})$ and 
$\{ \tilde{ \mathcal{F}}_t \}$-adapted processes with continuous paths
$(\tilde{X}(\cdot), \tilde{\xi}(\cdot))$ and a Wiener process $\tilde{W}(\cdot)$ such that 
$(\tilde{X}(\cdot), \tilde{\xi}(\cdot))$ is a weak solution to (\ref{state}) and law  of $X(\cdot)$ is same as
law of $\tilde{X}(\cdot)$. Now since (\ref{state}) has a unique weak solution, law of $(X(\cdot), \xi(\cdot))$ 
equals the law  of $(\tilde{X}(\cdot), \tilde{\xi}(\cdot))$. Converse follows from It$\hat{\rm o}$'s 
formula.
\end{proof}
\begin{remark}
Under suitable $C^2$ smoothness assumption on the domain and bounded continuity
assumption on  direction of reflection $\gamma$, the equivalence is shown in \cite{stroock_varadhan}.
The case of domains with piecewise smooth boundaries and with constant direction of
reflections is treated in  \cite{Budhiraja}. 
\end{remark}

For an admissible control $v(\cdot)$, if $(X(\cdot), \xi(\cdot))$ denote a unique weak solution 
pair to the RSDE (\ref{state}) on  $(\Omega, {\mathcal F}, \{\mathcal F_t\}, P)$ and $\tau$  a
$\{\mathcal {F}_t\}$-stopping time, then ${\mathcal F}_{\tau}$ is finitely generated and hence using 
Theorem 1.3.4, p.34 of \cite{stroock_varadhan}, it follows that regular conditional probability 
distribution (rcpd) $P_{\omega}$ of $P$ given ${\mathcal F}_\tau$ exists.  
Now we prove a result analogous to Lemma 2.3.7 of \cite{arapostathis_borkar_ghosh}.
\begin{lemma}Let $(X(\cdot), \xi(\cdot))$  denote a weak solution pair corresponding to an admissible
feedback  control $v(\cdot)$ and defined on  $(\Omega, {\mathcal F}, \{\mathcal F_t\}, P)$ 
and $\tau$ be a finite   $\{\mathcal {F}_t\}$-stopping time. Then the conditional law $\mu_\omega$ 
of the process $X(\tau + \cdot)$ given  ${\mathcal F}_\tau$ is a.s. the law of the process 
$X_{\omega}(\cdot)$, where $X_{\omega}(\cdot)$
is a unique weak solution to the RSDE (\ref{state}) on a probability space 
$(\Omega_{\omega}, {\mathcal F}_{\omega}, \{\mathcal {F}_{\omega, t} \}, P_{\omega})$ for an admissible
control given by $v_{\omega} (t) = v (t + \tau(\omega), X[0, \tau(\omega) + t], \xi[0,\tau(\omega) + t] ),
 t \geq 0$.
\end{lemma}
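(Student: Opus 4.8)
The plan is to transfer the statement into the language of the constrained controlled martingale problem, where the regular conditional probability distribution (rcpd) machinery applies cleanly, and then to invoke well-posedness to identify the conditional law. First I would use the equivalence of Theorem 1.1: since $(X(\cdot),\xi(\cdot))$ is a weak solution to (\ref{state}) for the feedback control $v(\cdot)$, the pair solves the constrained controlled martingale problem, so that for every $f\in\mathcal{H}$ the process
\[
M_f(t) \ = \ f(X(t)) - \int_0^t \mathcal{L}f(X(s),v(s))\,ds + \int_0^t \nabla f\cdot\gamma(X(s))\,d\xi(s)
\]
is an $\{\mathcal{F}_t\}$-martingale under $P$. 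As already noted preceding the statement, $\mathcal{F}_\tau$ is finitely generated, so by Theorem 1.3.4 of \cite{stroock_varadhan} the rcpd $\{P_\omega\}$ of $P$ given $\mathcal{F}_\tau$ exists.

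Writing $\tilde{X}(t) := X(\tau+t)$ and $\tilde{\xi}(t) := \xi(\tau+t)-\xi(\tau)$, the core step is to show that for $P$-a.e.\ $\omega$ the shifted pair $(\tilde{X}(\cdot),\tilde{\xi}(\cdot))$ solves, under $P_\omega$, the constrained controlled martingale problem with initial point $X_\tau(\omega)$ and with the shifted control $v_\omega(t)=v(t+\tau(\omega),X[0,\tau(\omega)+t],\xi[0,\tau(\omega)+t])$. This is carried out exactly as in Lemma 2.3.7 of \cite{arapostathis_borkar_ghosh}: applying the optional sampling theorem to $M_f$ at the times $\tau$ and $\tau+t$ shows that $M_f(\tau+\cdot)-M_f(\tau)$ is a martingale for the shifted filtration, and the defining property of the rcpd then transports this martingale property to $P_\omega$ for almost every $\omega$. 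One must also check that $v_\omega$ is again an admissible feedback control for the shifted system; this is immediate from the progressive measurability of the feedback map $v$, since the pre-$\tau$ history $X[0,\tau(\omega)],\xi[0,\tau(\omega)]$ is frozen by $\omega$ while the post-$\tau$ dependence is only on the new path $(\tilde{X},\tilde{\xi})$.

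Finally, the weak solution to (\ref{state}) is unique under (A1)--(A2), so by the equivalence of Theorem 1.1 the constrained controlled martingale problem for the control $v_\omega$ and initial condition $X_\tau(\omega)$ is well-posed. Hence the conditional law $\mu_\omega$, which the previous step exhibits as a solution of this martingale problem, must coincide with the law of the unique weak solution $X_\omega(\cdot)$, which gives the claim.

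The hard part will be the measurability and regularity bookkeeping in the core step: one needs a single $P$-null set off which the disintegration succeeds uniformly in $f$ ranging over a countable determining subfamily of $\mathcal{H}$ and in the rational times, and one must verify that the boundary term $\int_0^t \nabla f\cdot\gamma(X(s))\,d\xi(s)$ shifts correctly under the rcpd --- that is, that the increasing process $\tilde{\xi}$ and its support on $\partial D$ retain, under $P_\omega$, the properties (i)--(ii) defining the constrained martingale problem. These are handled by choosing a countable convergence-determining class and intersecting the associated null sets, precisely as in the unconstrained argument of \cite{arapostathis_borkar_ghosh}, the only genuinely new input being the local-time term, which is monotone and therefore introduces no difficulty beyond careful conditioning.
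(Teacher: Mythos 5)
Your proposal is correct and takes essentially the same route as the paper: both pass via Theorem 1.1 to the constrained controlled martingale problem, transfer the martingale property of $M_f$ to the rcpd $P_\omega$ (the paper cites Theorem 1.2.10, p.~28 of \cite{stroock_varadhan} for precisely the optional-sampling/conditioning step you describe via Lemma 2.3.7 of \cite{arapostathis_borkar_ghosh}), and conclude by identifying $\mu_\omega$ through weak uniqueness. Your explicit handling of the countable determining family, the admissibility of $v_\omega$, and the final well-posedness identification merely spells out steps the paper's proof leaves implicit.
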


\begin{proof} For $f \in {\mathcal H}$, since 
\[
M_t \ = \ f(X_t) - f(X_0) - \int^t_0 {\mathcal L} (X_s, v_s) ds + \int^t_0 \nabla f \cdot \gamma (X_s) d \xi_s
, \ t \geq 0,
\]
where ${\mathcal L}$ is given by (\ref{generator})
is an $\{\mathcal {F}_t\}$-martingale on $(\Omega, {\mathcal  F}, P)$, it follows from Theorem 1.2.10, p.28
of \cite{stroock_varadhan} that there exist a $P$-null set $N$ such that for $\omega \notin N$,
$M^{\tau (\omega)}_f (t) = M_t - M_{t \wedge \tau(\omega)} , t \geq 0$ is a Martingale with respect to 
$\{\mathcal {F}_t\}$ on $(\Omega, {\mathcal F}, P_{\omega})$. 
Hence under $P_{\omega}$, 
\[
M^{\tau (\omega)}_f (t) \ = \ f(X_t) - f(X_{\tau(\omega)}) - \int^t_{\tau(\omega)} {\mathcal L} f(X_s, v_s) ds
+ \int^t_{\tau (\omega)} \nabla f \cdot \gamma (X_s) d \xi_s ,  t \geq \tau(\omega) 
\]
is a Martingale under $P_{\omega} , \omega \notin N$. 
i.e.,
\begin{eqnarray*}
M^{\tau (\omega)}_f (t) & = & f(X_t) - f(X_{\tau(\omega)}) - \int^t_0 {\mathcal L} 
f(X(\tau(\omega) + s, v_{s + \tau(\omega)}) ds \\
&& + \int^t_0\nabla f \cdot \gamma (X_{s + \tau (\omega)}) d \xi_{s + \tau(\omega)} ,  t \geq 0 
\end{eqnarray*}
is a Martingale under $P_{\omega} , \omega \notin N$. i.e. $(X_{\omega} (\cdot) , \xi_{\omega}(\cdot)) 
:= (X(\cdot + \omega), \xi(\cdot) + \tau(\omega) - \xi(\tau(\omega))$ solves the constrained controlled 
martingale problem for the admissible control $v_{\omega}$ and initial distribution $X(\tau (\omega))$. This completes the 
proof. 
\end{proof}
Now we give a characterization for recurrence of the RSDE (\ref{state}) corresponding to a stationary
Markov control in the following lemma.
\begin{lemma} Let $v (\cdot) \in \mathcal{S}$ and $X(\cdot)$ be a solution to the RSDE (\ref{state})
corresponding to $v(\cdot)$ and $B$ be a ball  in $D$. Then
$X(\cdot)$ is recurrent iff the PDE
\begin{eqnarray}\label{pdeexterior}
\mathcal{L} \varphi (x, v(x)) & = & 0, \ {\rm in} \ \Bar{B}^c, \nonumber \\ 
\varphi & \equiv & 1 \ {\rm on}\ \partial B, \ \nabla \varphi \cdot \gamma \equiv 0 \ {\rm on} \ \partial D.
\end{eqnarray}
has a unique non negative bounded solution in $W^{2, d +1}_{loc} (\Bar{B}^c) \cap C(B^c)$.
\end{lemma}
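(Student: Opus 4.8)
The plan is to reduce recurrence to the probability of hitting $B$ and to single out $\varphi\equiv 1$ as the distinguished solution. Set $\tau_B=\inf\{t\ge 0: X_t\in\bar B\}$; by definition $X(\cdot)$ is recurrent iff $P_x(\tau_B<\infty)=1$ for every $x\in\bar B^c$. The first observation is that $\varphi\equiv 1$ always solves (\ref{pdeexterior}), since $\mathcal L$ carries no zeroth order term and $\nabla 1\cdot\gamma\equiv 0$. Hence the hypothesis ``(\ref{pdeexterior}) has a unique non-negative bounded solution'' is equivalent to ``$\varphi\equiv 1$ is the only non-negative bounded solution'', and the whole statement amounts to proving that recurrence is equivalent to this rigidity.

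The analytic engine is a martingale identity. For a non-negative bounded $\varphi\in W^{2,d+1}_{loc}(\bar{B}^c)\cap C(B^c)$ solving (\ref{pdeexterior}), I would apply the It\^o--Krylov formula for the reflected diffusion to the stopped process $X_{t\wedge\tau_B}$; this is legitimate because $d+1>d$, the matrix $a$ is uniformly elliptic by {\bf (A1)}(iii), and $\varphi$ is $W^{2,d+1}_{loc}$, with the boundary local time reading off from the equivalence of the constrained controlled martingale problem and the weak solution proved above. Up to $\tau_B$ the state stays in $\bar B^c$, where $\mathcal L\varphi(\cdot,v(\cdot))=0$; the boundary term $\int_0^t\nabla\varphi\cdot\gamma(X_s)\,d\xi_s$ vanishes because $\xi$ increases only on $\partial D$ and $\nabla\varphi\cdot\gamma\equiv 0$ there. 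Thus $t\mapsto\varphi(X_{t\wedge\tau_B})$ is a bounded $\{\mathcal F_t\}$-martingale and $\varphi(x)=E_x[\varphi(X_{t\wedge\tau_B})]$ for all $t$.

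For the forward implication, assume $X(\cdot)$ is recurrent and let $\varphi$ be any non-negative bounded solution. Since $\tau_B<\infty$ a.s. and the paths are continuous, $X_{t\wedge\tau_B}\to X_{\tau_B}\in\partial B$ as $t\to\infty$, where $\varphi\equiv 1$; bounded convergence in the identity above gives $\varphi(x)=E_x[\varphi(X_{\tau_B})]=1$, so $\varphi\equiv 1$ and uniqueness holds. For the converse I would construct the canonical candidate $u(x):=P_x(\tau_B<\infty)$ and show it solves (\ref{pdeexterior}). Put $\sigma_n=\inf\{t:\|X_t\|\ge n\}$ and $u_n(x)=P_x(\tau_B<\sigma_n)$; on the bounded domain $G_n=\{x\in\overline D:\|x\|<n\}\setminus\bar B$ the function $u_n$ is the solution of the oblique boundary value problem $\mathcal L u_n(\cdot,v(\cdot))=0$ in $G_n$, with $u_n=1$ on $\partial B$, $u_n=0$ on $\{\|x\|=n\}$ and $\nabla u_n\cdot\gamma=0$ on $\partial D$, whose solvability and $W^{2,d+1}$ regularity follow from the same elliptic theory for oblique problems invoked for (\ref{state}). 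Since $0\le u_n\le 1$, and $\sigma_n\uparrow\infty$ (no explosion, by boundedness of the coefficients and reflection into $\overline D$), we have $u_n\uparrow u$; interior and boundary $W^{2,d+1}$ estimates then give local bounds uniform in $n$, and passing to a weak limit shows that $u\in W^{2,d+1}_{loc}(\bar{B}^c)\cap C(B^c)$ is a non-negative bounded solution of (\ref{pdeexterior}). Under the uniqueness hypothesis $u\equiv 1$, i.e. $P_x(\tau_B<\infty)=1$ for all $x$, which is recurrence.

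The routine parts are the optional stopping and the monotone passage $u_n\uparrow u$. The main obstacle I expect is analytic and twofold: first, justifying the reflected It\^o--Krylov formula for merely $W^{2,d+1}_{loc}$ functions so that the reflection contributes exactly $\int\nabla\varphi\cdot\gamma\,d\xi$ (this is where the Sobolev exponent $d+1$ and uniform ellipticity are essential, and where one leans on the martingale-problem/weak-solution equivalence established earlier); and second, obtaining the $n$-uniform $W^{2,d+1}$ estimates \emph{up to} the reflecting boundary $\partial D$, so that the monotone limit $u$ genuinely lies in $W^{2,d+1}_{loc}$ and inherits the oblique condition in the limit rather than merely the interior equation.
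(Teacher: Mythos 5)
Your proposal is correct in outline and rests on the same underlying strategy as the paper --- note that $\varphi\equiv 1$ always solves (\ref{pdeexterior}) and compare an arbitrary nonnegative bounded solution with the hitting probability via the It\^o--Dynkin (It\^o--Krylov) formula --- but you carry the argument substantially further than the paper's own proof, which is only a sketch. The paper's entire written argument is: $\varphi\equiv 1$ is a solution; It\^o--Dynkin plus Fatou give $\varphi(x)\ge P_x(\tau(\bar B^c)<\infty)$ for every nonnegative bounded solution; and ``the result follows'' since nondegeneracy makes recurrence equivalent to $B$-recurrence for a single ball. Neither implication is actually written out there: the paper does not upgrade the Fatou inequality to the equality $\varphi(x)=E_x[\varphi(X_{t\wedge\tau_B})]$ and pass to the limit to conclude $\varphi\equiv 1$ under recurrence (your forward direction via bounded convergence and continuity of paths), nor does it verify that $u(x)=P_x(\tau_B<\infty)$ is itself a solution of (\ref{pdeexterior}), which is precisely what makes the uniqueness hypothesis bite in the converse; your exhaustion $u_n(x)=P_x(\tau_B<\sigma_n)$ through mixed Dirichlet--oblique problems on $G_n$ supplies that missing step. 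Two caveats. First, your opening ``by definition'' is slightly off: the paper's definition of recurrence quantifies over all open connected sets, and the reduction to a single ball $B$ is exactly where nondegeneracy enters --- the paper invokes this explicitly, and you should too. Second, the obstacle you flag in the converse is real and somewhat worse than for a smooth exterior domain: $G_n$ has corners both where the sphere $\{\|x\|=n\}$ meets $\partial D$ and along the edges of the orthant itself, so $W^{2,d+1}$ solvability of the mixed problem and $n$-uniform up-to-boundary estimates require the smooth approximating domains $D_{lm}$ of Section 1 (or the submartingale-problem machinery) rather than off-the-shelf oblique-derivative theory; the paper sidesteps writing any of this by simply asserting the conclusion. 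With those two points addressed, your argument is a complete --- indeed more complete than the published --- proof of the lemma.
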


\begin{proof} Note that $\varphi \equiv 1$ is always a positive bounded solution of (\ref{pdeexterior}) in 
$W^{2, d +1}_{loc} (\Bar{B}^c) \cap C(B^c)$. Also an application of It$\hat{\rm o}$-Dynkin formula 
and Fatou's lemma implies that any bounded non negative solution  
$\varphi \in W^{2, d +1}_{loc} (\Bar{B}^c) \cap C(B^c)$ satisfies
\[
\varphi (x) \geq P_x (\tau(\Bar{B}^c ) < \infty ) ,  x \in \overline{D}.
\]
Hence the result follows, since non degeneracy of the RSDE implies that $X(\cdot) $ recurrent iff 
it is $B$-recurrent for some ball  $B$ in $D$. 

\end{proof}

\subsection{Notations} In this subsection, we introduce  various frequently used notations in this
paper. 
We denote $\displaystyle{ \sup_{v,  x} |r(x, v)|}$ by $\|r\|_{\infty}.$
For $\varphi \in C_b(\overline{D}),$ the space of all real-valued  bounded continuous functions,
we denote for each $B$, a Borel subset of $\overline{D}$,
\[
\| \varphi \|_{\infty, B} \ = \ \sup_{x \in B} | \varphi (x)|, \ \|\varphi\|_{\infty} = \sup_{x \in \overline{D}}
| \varphi(x)|.
\]

For a Banach space $\mathcal{X}$ with norm $\| \cdot \|_{\mathcal{X}}, \ 1 \leq p <\infty$, define
for $\kappa \geq 0$
\[
L^p( \kappa, T ; \mathcal{X}) \ = \ \{ \varphi : (\kappa  , \ T) \to \mathcal{X} |  \varphi 
\ {\rm is\ Borel\ measurable\ and} \ \int^T_{\kappa} \| \varphi (t) \|^p _{\mathcal{X}} \, dt  < \infty \}
\]
with the norm 
\[
\|\varphi\|_{p; \mathcal{X}} \ = \ \Big[ \int^T_{\kappa} \|\varphi (t) \|^p_{\mathcal{X}} \, dt 
\Big]^{\frac{1}{p}}.
\]

The norm of the Banach space ${L^\infty ((\kappa, 1) \times D)}$ is denoted by 
$\| \cdot\|_{\infty ; (\kappa, 1) \times D}$. 

 $ C^{\infty}_c ( (\kappa, 1) \times D)$  denotes  the space of all functions in 
$C^{\infty} ((\kappa, 1) \times D)$  which are compactly supported. The spaces 
$ C^{\infty}_c ( (\kappa, 1] \times \overline{D}), \  C^{\infty}_c ( [\kappa, 1] \times \overline{D})$
are similarly defined.

For $\kappa < T < \infty$ and an open bounded set  $B$ in $\mathbb{R}^d, \ 
H^{\beta/2, \beta}( [\kappa, T] \times \overline{B}), \kappa \geq 0$,  denotes the set of all
continuous functions $\varphi(t,x)$ in $[\kappa, T] \times \overline{B}$ together with all the derivatives of the from $D^r_tD^s_x\varphi(t,x) $ for $2r+s< \beta$, have a finite norm
$$\|\varphi\|_{H(\beta); [\kappa, T] \times \overline{ B}}
=\| \varphi\|_{\infty ; [\kappa, T] \times \overline{B}}
+H^\beta_{[\kappa, T] \times \overline{B}}(\varphi)
+\sum_{j=1}^{[\beta]}H^j_{[\kappa, T] \times \overline{B}}(\varphi),$$ where
\interdisplaylinepenalty=0
\begin{eqnarray*}
&&H^j_{[\kappa, T] \times \overline{B}}(\varphi)=\sum_{2r+s=j}\| D^r_tD^s_x 
\varphi \|_{\infty ; [\kappa, T] \times \overline{B}} \\
&&H^\beta_{[\kappa, T] \times \overline{B}}(\varphi)=H^\beta_{x, [\kappa, T] \times \overline{B}}(\varphi)
+H^{\beta/2}_{t, [\kappa, T] \times \overline{B}}(\varphi) \\
&&H^\beta_{x,(\kappa, T) \times B}(\varphi)=\sum_{2r+s=[\beta]}
H^{(\beta-[\beta])}_{x, [\kappa, T] \times \overline{B}} (D^r_tD^s_x \varphi ) \\
&&H^{\beta/2}_{t, [\kappa, T] \times \overline{B}}(\varphi)=\sum_{0<\beta-2r-s<2}
H^{(\frac{\beta-2r-s}{2})}_{t, [\kappa, T] \times \overline{B}} (D^r_tD^s_x \varphi ) \\
&&H^{(\alpha)}_{x, [\kappa, T] \times \overline{B}} (\varphi)=\displaystyle{\sup_{(t,x),(t,\bar{x})\in 
[\kappa, T] \times \overline{B}}
\dfrac{|\varphi(t,x)-\varphi(t,\bar{x})|}{\|x-\bar{x}\|^\alpha}}, \; 0<\alpha <1, \\
&&H^{(\alpha)}_{t, [\kappa, T] \times \overline{B}} (\varphi)=\displaystyle{\sup_{(t,x),(\bar{t},x)\in 
[\kappa, T] \times \overline{B}}\dfrac{|\varphi(t,x)-\varphi(\bar{t},x)|}{|t-\bar{t}|^\alpha}}, \; 0<\alpha <1.
\end{eqnarray*}
We denote 
\[
C^{\frac{\beta}{2}, \beta}( [\kappa, T] \times \overline{B}) \ 
= \ \{ \varphi \in C( [\kappa, T] \times \overline{B})
| \varphi \in C^{\beta/2, \beta}( [\kappa, T] \times K), \ {\rm for\ some\ compact\ subset\ of} \ \overline{B}
\} .
\]

The space $\mathcal{W}^{1,2,p}( (\kappa, T) \times \overline{D})), \kappa \geq 0$, denotes the set of all
$\varphi \in L^p ( \kappa, T ; W^{2, p}(\overline{D}))$ such that 
$\frac{\partial \varphi}{\partial t} \in L^p ( (\kappa, T; L^p(\overline{D}))$ with the norm
given by
\[
\|\varphi\|^p_{1,2,p; W^{2, p}(\overline{D})} \ = \ 
\| \varphi \|^p_{p; W^{2,p}(\overline{D})} + \| \frac{\partial \varphi}{\partial t} \|^p_{p; L^p(\overline{D})}
, \ 1 \leq p < \infty.
\]

Also the local Sobolev spaces $\mathcal{W}^{1,2, p}_{\rm loc}((\kappa , T) \times \overline{D})$ 
are defined by
\begin{eqnarray*}
&&\mathcal{W}^{1,2, p}_{{\rm loc}}(\kappa, T) \times \overline{D})\\& = & 
\Big \{ \varphi : (\kappa, T) \times \overline{D} \to
\mathbb{R}| \, \varphi \ {\rm is\ measurable\ and} \ \varphi \in W^{1,2,p}( (\kappa, T) \times K),
 \\&& {\rm for\ some}\ K\ {\rm is \ a\ compact\ subset\ of }\ \overline{D} \Big \}.
\end{eqnarray*}

For any  domain $B$ in $\overline{D}$, define $$W^{1,2, p} ( (\kappa, T) \times B) \ = \ 
\Big\{ \varphi : (\kappa , T ) \times B \to \mathbb{R} \Big| \|\varphi\|_{1,2,p; (\kappa, T) \times B} < \infty
\Big \}, $$ where the norm $\| \cdot \|_{1,2,p; (\kappa, T) \times B}$ is defined as 
\interdisplaylinepenalty=0
\begin{eqnarray*}
\|\varphi\|^p_{1,2,p; (\kappa, T) \times B } & = &  \int^T_{\kappa} \int_{B} | \varphi(t, x)|^p  dx dt  + 
\int^T_{\kappa} \int_{B} \Big | \frac{\partial \varphi(t,x)}{\partial t}\Big |^p  dx dt \\
& +& \sum_i \int^T_{\kappa} \int_{B} \Big| \frac{\partial \varphi(t,x)}{\partial x_i} \Big|^p  dx  dt+ 
\sum_{ij} \int^T_{\kappa} \int_{B} \Big| \frac{\partial^2 \varphi(t,x)}{\partial x_i x_j} \Big|^p| dx dt.
\end{eqnarray*}

\section{ Analysis of the Discounted  Cost criterion }
In this section, we  study the  discounted  risk-sensitive control problem
with the state dynamics  (\ref{state}) and cost criterion 
\[
J_{\alpha}^v (\theta,x)= \frac{1}{\theta} \, \ln \,
 E_x^v\left[ e^{\theta\int_{0}^{\infty} e^{-\alpha t} \; r(X_t,v_t) dt}\right].
\]
The $\alpha$-discounted  risk-sensitive control problem is to minimize (\ref{discountedcost}) over all admissible 
controls. We define the so-called `value function' for the cost 
(\ref{discountedcost}) as
\begin{equation}\label{discounted_risk_value}
 \phi_{\alpha}(\theta,x)= \inf_{v\in\mathcal A} J_{\alpha}^v (\theta,x).
\end{equation}
Set
\begin{equation}\label{discounted_cost}
\bar J_{\alpha}^v (\theta,x)= E_x^v\left[ e^{\theta\int_{0}^{\infty} e^{-\alpha t} \; r(X_t,v_t) dt}\right].  
\end{equation}
Since logarithm is an increasing function for fixed $\theta>0$, a minimizer of $\bar J_{\alpha}^v (\theta,x)$ if it exists will be a
minimizer of $J_{\alpha}^v (\theta,x))$. Corresponding to the cost (\ref{discounted_cost}), the value function is defined as 
\begin{equation}\label{discounted_value}
 u_{\alpha}(\theta,x)= \inf_{v\in \mathcal A} \bar J_{\alpha}^v (\theta,x).
\end{equation}
Note that 
\begin{equation}\label{relation}
 \phi_{\alpha}(\theta,x)= \frac{1}{\theta} \ln u_{\alpha}(\theta,x) .
\end{equation}
Since we are dealing with exponential cost we need {\it multiplicative} version of DPP in place of  
additive DPP, see [\cite{borkar}, pp. 53-59].
We mimic the arguments as in \cite{menaldi_robin} to prove DPP for the value function $u_{\alpha}(\theta,x)$.
 
\begin{theorem}[DPP]
Let $\tau$ be any bounded stopping time with respect to the natural filtration of process $X(\cdot)$, i.e., $\{ \mathcal F^X_t\}$. 
Then 
\begin{equation}\label{DPP}
 u_{\alpha}(\theta,x) = \inf_{v(\cdot)} E^v_x\left[ e^{\theta\int_{0}^{\tau} e^{-\alpha t} \; r(X_t,v_t) dt}
u_{\alpha}\left(\theta e^{-\alpha \tau}, X(\tau)\right) \right]. 
\end{equation}
where infimum is taken over all feedback controls.
\end{theorem}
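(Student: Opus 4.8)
The plan is to exploit the semigroup structure of the discounted integral. Splitting at $\tau$ and substituting $s=t-\tau$ in the tail gives
\[
\int_0^\infty e^{-\alpha t} r(X_t,v_t)\,dt = \int_0^\tau e^{-\alpha t} r(X_t,v_t)\,dt + e^{-\alpha\tau}\int_0^\infty e^{-\alpha s} r(X_{s+\tau}, v_{s+\tau})\,ds,
\]
so that exponentiating factorizes $\bar J^v_\alpha$ into a path functional on $[0,\tau]$ times a tail factor. The key observation is that this tail factor is exactly the discounted risk-sensitive cost for the shifted process $X(\tau+\cdot)$ but with the \emph{reduced} risk-aversion parameter $\theta e^{-\alpha\tau}$; this is the origin of the $\theta e^{-\alpha\tau}$ appearing in (\ref{DPP}). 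Throughout I would use the reduction to feedback controls (our first lemma) so that $u_\alpha(\theta,x)=\inf_{v\ \mathrm{feedback}}\bar J^v_\alpha(\theta,x)$, which also makes the conditional law lemma applicable to the shifted control.

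For the inequality ``$\geq$'' I would fix any feedback control $v$ and condition on $\mathcal F_\tau$. By the conditional law lemma, the conditional expectation of the tail factor given $\mathcal F_\tau$ equals $\bar J^{v_\omega}_\alpha(\theta e^{-\alpha\tau(\omega)}, X_{\tau(\omega)})$ for the shifted control $v_\omega$ of that lemma, and is therefore bounded below by $u_\alpha(\theta e^{-\alpha\tau}, X_\tau)$ by the very definition of the value function. The tower property then yields
\[
\bar J^v_\alpha(\theta,x) \ \geq\ E^v_x\Big[ e^{\theta\int_0^\tau e^{-\alpha t} r(X_t,v_t)\,dt}\, u_\alpha(\theta e^{-\alpha\tau}, X_\tau)\Big],
\]
and taking the infimum over feedback $v$ delivers one half of (\ref{DPP}).

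For the reverse inequality ``$\leq$'' I would fix a feedback control $v$ to be used on $[0,\tau)$ and $\varepsilon>0$, and by a measurable selection argument choose a control $w$, depending measurably on the terminal data $(\tau, X_\tau)$, that is $\varepsilon$-optimal for the restarted problem from $X_\tau$ with parameter $\theta e^{-\alpha\tau}$, i.e. $\bar J^{w}_\alpha(\theta e^{-\alpha\tau}, X_\tau)\leq u_\alpha(\theta e^{-\alpha\tau}, X_\tau)+\varepsilon$. Concatenating $v$ on $[0,\tau)$ with $w$ on $[\tau,\infty)$ produces an admissible control $\tilde v$, and applying the conditional law lemma again to evaluate its cost would give
\[
u_\alpha(\theta,x)\ \leq\ \bar J^{\tilde v}_\alpha(\theta,x)\ =\ E^v_x\Big[ e^{\theta\int_0^\tau e^{-\alpha t} r(X_t,v_t)\,dt}\, \bar J^{w}_\alpha(\theta e^{-\alpha\tau}, X_\tau)\Big]\ \leq\ E^v_x\Big[ e^{\theta\int_0^\tau e^{-\alpha t} r(X_t,v_t)\,dt}\, \big(u_\alpha(\theta e^{-\alpha\tau}, X_\tau)+\varepsilon\big)\Big].
\]
Letting $\varepsilon\to 0$ and then taking the infimum over the initial segment $v$ would close the loop and, together with the previous paragraph, identify $u_\alpha(\theta,x)$ with the right-hand side of (\ref{DPP}).

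The hard part will be the measurable selection and concatenation step in the ``$\leq$'' direction. I must first establish that $u_\alpha(\theta,x)$ is jointly measurable (in fact continuous) in $(\theta,x)$, using standard stability estimates for the RSDE (\ref{state}); the subtlety is that the reduced parameter $\theta e^{-\alpha\tau}$ is itself random, so the selection has to be performed over the two-dimensional data $(\theta e^{-\alpha\tau}, X_\tau)$ rather than over $x$ alone. I would then invoke a measurable-selection theorem to obtain the $\varepsilon$-optimal restart controls $w$ depending measurably on these data, and verify that the concatenated control $\tilde v$ is genuinely progressively measurable with respect to the path filtration, hence a bona fide feedback control, so that the conditional law lemma factorizes its cost exactly as written. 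Once joint continuity of $u_\alpha$ is in hand the selection and concatenation are routine, following the pattern of \cite{menaldi_robin}.
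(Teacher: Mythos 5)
Your proposal is correct and follows essentially the same route as the paper: split the discounted integral at $\tau$ to produce the reduced parameter $\theta e^{-\alpha\tau}$, use the conditional-law lemma (Lemma 1.2) with the tower property for the lower bound, and concatenate an arbitrary feedback prefix with an $\epsilon$-optimal restart control (the paper's construction $v(t)=v_1(t)I_{\{t<\tau\}}+v_2(t-\tau)I_{\{t\geq\tau\}}$) for the upper bound, letting $\epsilon\to 0$. Your closing paragraph on measurable selection over the random data $(\theta e^{-\alpha\tau}, X_\tau)$ is a point the paper passes over silently when it invokes ``an $\epsilon$-optimal feedback control for initial data $X(\tau)$,'' so your attention to it is a refinement of, not a departure from, the paper's argument.
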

\begin{proof} Note that, given two feedback controls $v_1(t)$ and $v_2(t)$,  $t\geq 0$ and $\tau$ as
above, $v(\cdot)$ defined as
\begin{equation}\label{control_dpp}
 v(t) = v_1(t) I_{\{ t < \tau \} } + v_2(t-\tau) I_{\{ t \geq \tau \} }, \ t\geq 0,
\end{equation}
is also a feedback control. Indeed, we are given pairs of processes 
$(X_1(\cdot), \xi_1(\cdot), v_1(\cdot))$ and
$(X_2(\cdot), \xi_1(\cdot), v_2(\cdot))$ satisfying (\ref{state}) on some, possibly distinct, probability spaces 
$(\Omega_1,\mathcal F_1,P_1)$, $(\Omega_2,\mathcal F_2,P_2)$ respectively, with $v_1(\cdot), v_2(\cdot)$ in
feedback from. Also, $X_1(0)=x$ and the law of 
 $X_2(0)=$ the law of $X_1(\tau)$, where $\tau$ is a prescribed stopping time with respect to the natural filtration of process $X_1(\cdot)$. By augmenting $(\Omega_1,\mathcal F_1,P_1)$ suitably, one can construct a processes $(X(\cdot), \xi(\cdot))$  and $v(\cdot)$
 satisfying (\ref{state}) such that they coincide with $(X_1(\cdot), \xi_1(\cdot))$ and $v_1(\cdot)$ on
 $[0,\tau]$, and $(X(\tau+\cdot), \xi(\tau+\cdot))$ and $v(\tau+\cdot)$ agree in law with $(X_2(\cdot),\xi(\cdot))$ and $v_2(\cdot)$. Also the conditional law of $X(\tau+\cdot)$ of given 
 $\mathcal F_{\tau}$ is the same as its conditional law given $X(\tau)$ and agrees with the conditional
 law of $X(\tau+\cdot)$ given $X_2(0)$ a.s. with respect to the common law of $X_2(0),X(\tau)$.
The above construction uses Lemma 1.2. 

Let $\epsilon >0$. Let $X(\cdot)$ be a process (\ref{state}) controlled by $v(\cdot)$ as above 
with $v_1(\cdot)$ an arbitrary feedback control and $v_2(\cdot)$ an $\epsilon$-optimal 
 feedback control for initial data $X(\tau)$.
By (\ref{discounted_value}) we have
\begin{eqnarray*}
 u_{\alpha}(\theta,x) &\leq& E^v_x \left[ e^{\theta\int_{0}^{\tau} e^{-\alpha t} \; r(X_t,v_t) dt 
+ \theta \int_{\tau}^{\infty} e^{-\alpha t} \; r(X_t,v_t) dt } \right] \\
&=& E^v_x \left[ e^{\theta\int_{0}^{\tau} e^{-\alpha t} \; r(X_t,v_t) dt }
 \times  e^{ \theta e^{-\alpha \tau} \int_{0}^{\infty} e^{-\alpha t} \; r(X_{t+\tau},v_{t+\tau}) dt } \right] \\
&=& E^v_x \left[ e^{\theta\int_{0}^{\tau} e^{-\alpha t} \; r(X_t,v_t) dt }
  E\left[ e^{ \theta e^{-\alpha \tau} \int_{0}^{\infty} e^{-\alpha t} \; r(X_t,v_t) dt }\Big | X(\tau) \right] \right]\\
&\leq& E^v_x \left[ e^{\theta\int_{0}^{\tau} e^{-\alpha t} \; r(X_t,v_t) dt } 
\left( u_{\alpha}\left(\theta e^{-\alpha \tau}, X(\tau)\right) + \epsilon\right) \right] \\
&=& E^v_x \left[ e^{\theta\int_{0}^{\tau} e^{-\alpha t} \; r(X_t,v_t) dt } 
 u_{\alpha}\left(\theta e^{-\alpha \tau}, X(\tau)\right)  \right] + \epsilon 
E^v_x \left[ e^{\theta\int_{0}^{\tau} e^{-\alpha t} \; r(X_t,v_t) dt }\right] .
\end{eqnarray*}
Since $\tau, r $ are bounded and $\epsilon$ is arbitrary we get
$$
u_{\alpha}(\theta,x) \leq \inf_{v(\cdot)} E^v_x \left[ e^{\theta\int_{0}^{\tau} e^{-\alpha t} \; r(X_t,v_t) dt } 
 u_{\alpha}\left(\theta e^{-\alpha \tau}, X(\tau)\right)  \right].
$$
Conversely, Let $\epsilon>0$ and $v(\cdot)$ is an $\epsilon$-optimal feedback control for initial data $X(0)=x$. Then 
\begin{eqnarray*}
 u_{\alpha}(\theta,x)+\epsilon &\geq& E^v_x\left[ e^{\theta\int_{0}^{\tau} e^{-\alpha t} \; r(X_t,v_t) dt 
+ \theta \int_{\tau}^{\infty} e^{-\alpha t} \; r(X_t,v_t) dt } \right] \\
&=& E^v_x\left[ e^{\theta\int_{0}^{\tau} e^{-\alpha t} \; r(X_t,v_t) dt }
  E\left[ e^{ \theta e^{-\alpha \tau} \int_{0}^{\infty} e^{-\alpha t} \; r(X_t,v_t) dt }\Big | X(\tau) \right] \right]\\
&\geq& E^v_x\left[ e^{\theta\int_{0}^{\tau} e^{-\alpha t} \; r(X_t,v_t) dt }
  \inf_{v(\cdot)} E\left[ e^{ \theta e^{-\alpha \tau} \int_{0}^{\infty} e^{-\alpha t} \; r(X_t,v_t) dt }\Big | X(\tau) \right] \right]\\
&=& E^v_x \left[ e^{\theta\int_{0}^{\tau} e^{-\alpha t} \; r(X_t,v_t) dt } 
 u_{\alpha}\left(\theta e^{-\alpha \tau}, X(\tau)\right) \right] .
\end{eqnarray*}
Thus 
$$
u_{\alpha}(\theta,x)+\epsilon \geq  \inf_{v(\cdot)} E^v_x \left[ e^{\theta\int_{0}^{\tau} e^{-\alpha t} \; r(X_t,v_t) dt } 
 u_{\alpha}\left(\theta e^{-\alpha \tau}, X(\tau)\right)  \right].
$$
Letting $\epsilon\longrightarrow 0$ completes the proof.  
\end{proof}

Using dynamic programming heuristics, the HJB equations for discounted cost criterion is given by

\begin{equation}\label{discount_hjb}
\begin{array}{rcl}
\displaystyle{
 \alpha\theta\frac{\partial u_{\alpha}}{\partial \theta} } &=& \displaystyle{ \inf_{v \in U} 
\left[ b(x,v)\cdot 
\nabla u_{\alpha} + \theta r(x,v)u_{\alpha}\right] 
+ \frac{1}{2}trace (a(x)\nabla^2 u_{\alpha})  } \\
\displaystyle{ u_{\alpha}(0,x) } &=& \displaystyle{ 1 \mbox{ on } \overline {D}, \ \ \nabla u_{\alpha}(\theta,x) .\ \gamma(x)=0 \mbox{ on } (0,1) \times \partial D.}
\end{array}
\end{equation}
First we show that  (\ref{discount_hjb}) has unique a solution. 
There are two technical difficulties in solving  the p.d.e. (\ref{discount_hjb}). First is the
singularity in $\theta$ at $0$ and the second is the unbounded non smooth nature of the orthrant.
We circumvent these difficulties by suitable approximation arguments as follows.
For each $m,l \geq 1$ and $0 < \kappa < 1$, consider the p.d.e. 
\begin{equation}
\begin{array}{rcl}\label{D_m}
\displaystyle{
 \alpha\theta\frac{\partial u^{\kappa}_{\alpha,lm}}{\partial \theta} } &=& 
\displaystyle{ \inf_{v \in U} \left[ b(x,v)\cdot 
\nabla u^{\kappa}_{\alpha,lm} + \theta r(x,v)u^{\kappa}_{\alpha,lm}\right]+ \frac{1}{2} \, 
trace (a(x)\nabla^2 u^{\kappa}_{\alpha,lm}) } \\
\displaystyle{ u^{\kappa}_{\alpha,lm}(\kappa,x) } &=& \displaystyle{ e^{\frac{\kappa \|r\|_{\infty}}{\alpha}} \mbox{ on }  \overline D_{lm} , \ \ 
\nabla u^{\kappa}_{\alpha,lm}. \, \gamma=0 \mbox{ on } (\kappa,1)\times \partial D_{lm}.}
\end{array}
\end{equation}

\begin{lemma} \label{lemma_estimate}
Assume (A1) and (A2).  Then the  p.d.e. (\ref{D_m}) has a unique solution $ u^{\kappa}_{\alpha,lm}\in  H^{\frac{3}{2},3}([\kappa, 1] \times \overline{D}_{lm}) $, and 
 \begin{eqnarray}
\label{bound1} & \|u^{\kappa}_{\alpha,lm}\|_{\infty; [\kappa, 1] \times \overline{D}_{lm}} \leq e^{\frac{\theta \|r\|_{\infty}}{\alpha}},  \  \mbox{for all}  \; \kappa >0 , \ m,l \geq 1,\ 
   \\
\label{bound2} & \Big\| \frac{\partial u^{\kappa}_{\alpha,lm}}{\partial \theta}  
\Big\|_{\infty ; [\kappa, 1] \times \overline{D}_{lm}} \leq
3 e^{\frac{(\theta+3)\|r\|_{\infty}}{\alpha}} \frac{\|r\|_{\infty}}{\alpha}, \, \ \mbox{for all} \ \kappa >0 , 
\ m,l \geq 1 .
\end{eqnarray} 
\end{lemma}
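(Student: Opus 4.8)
The plan is to read $\theta$ as a (rescaled) time variable and to solve (\ref{D_m}) as a uniformly parabolic Hamilton--Jacobi--Bellman equation carrying an oblique boundary condition. Since $\theta$ runs over $[\kappa,1]$ with $\kappa>0$, the coefficient $\alpha\theta\geq\alpha\kappa>0$ is bounded below, so dividing (\ref{D_m}) by $\alpha\theta$ (equivalently, setting $\theta=e^{\alpha s}$, under which $\alpha\theta\,\partial_\theta=\partial_s$) produces a genuinely parabolic problem with Hamiltonian $H(x,\theta,u,p)=\inf_{v\in U}[\,b(x,v)\cdot p+\theta\,r(x,v)u\,]$. By (A1) and the boundedness and Lipschitz continuity of $\bar b,\bar r$ together with compactness of $U$, this $H$ is uniformly Lipschitz in $(u,p)$ and H\"older in $x$; uniform ellipticity of $a$ in (A1)(iii) makes the principal part parabolic; and the transversality $\gamma\cdot n_m\geq\eta>0$ in (A2)(i) makes $\nabla u\cdot\gamma=0$ a regular oblique derivative condition. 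Thus (\ref{D_m}) is a semilinear oblique-derivative parabolic problem with globally Lipschitz lower-order nonlinearity on a smooth bounded domain.

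For existence and uniqueness I would base a fixed-point argument on the linear theory. Given $w$ in a ball of $H^{3/2,3}([\kappa,1]\times\overline D_{lm})$, freeze the nonlinearity and solve the linear oblique-derivative parabolic problem $\alpha\theta\,\partial_\theta u-\frac{1}{2}\mathrm{trace}(a\nabla^2u)=H(x,\theta,w,\nabla w)$ with the prescribed data $u(\kappa,\cdot)=e^{\kappa\|r\|_{\infty}/\alpha}$ and $\nabla u\cdot\gamma=0$ on $(\kappa,1)\times\partial D_{lm}$; Schauder theory for oblique-derivative parabolic equations then gives a unique $u\in H^{3/2,3}$ and defines a map $T\colon w\mapsto u$. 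Global Lipschitz continuity of $H$ in $(u,p)$ makes $T$ a contraction in the sup-norm on a short $\theta$-subinterval, and since the $L^\infty$ a priori bound obtained below is uniform, one iterates over $[\kappa,1]$ and bootstraps the regularity to produce a solution in $H^{3/2,3}$. Global uniqueness then follows from the comparison principle for the HJB operator, the difference of two solutions obeying a linear parabolic inequality with bounded coefficients and the homogeneous oblique condition, hence vanishing by the oblique-derivative maximum principle.

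The estimate (\ref{bound1}) I would obtain by comparison with the spatially constant supersolution $\bar u(\theta)=e^{\theta\|r\|_{\infty}/\alpha}$. Since $\nabla\bar u\equiv 0$ and $\nabla^2\bar u\equiv 0$, one has $\alpha\theta\,\partial_\theta\bar u=\theta\|r\|_{\infty}\bar u\geq\theta\,\bar u\inf_{v}r(x,v)=\inf_{v}[\,b\cdot\nabla\bar u+\theta r\bar u\,]+\frac{1}{2}\mathrm{trace}(a\nabla^2\bar u)$, so $\bar u$ is a supersolution; it matches the data at $\theta=\kappa$ and satisfies $\nabla\bar u\cdot\gamma=0$. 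The comparison principle gives $u\leq\bar u$, while the maximum principle gives $u\geq 0$, which is (\ref{bound1}).

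For (\ref{bound2}) I would differentiate (\ref{D_m}) in $\theta$: by the envelope (Danskin) theorem the $\theta$-derivative of the infimum equals the partial derivative at a measurable minimizing selector $v^\ast(\theta,x)$, so $w:=\partial u/\partial\theta$ solves the linear parabolic equation $\alpha\theta\,\partial_\theta w=b(x,v^\ast)\cdot\nabla w+\theta r(x,v^\ast)w-\alpha w+r(x,v^\ast)u+\frac{1}{2}\mathrm{trace}(a\nabla^2w)$ with $\nabla w\cdot\gamma=0$ and the bounded data $w(\kappa,\cdot)=\alpha^{-1}e^{\kappa\|r\|_{\infty}/\alpha}\inf_{v}r(\cdot,v)$. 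Bounding the source $r(x,v^\ast)u$ by $\|r\|_{\infty}e^{\theta\|r\|_{\infty}/\alpha}$ via (\ref{bound1}) and comparing $w$ with the $\theta$-only supersolution $\alpha^{-1}\|r\|_{\infty}e^{\theta\|r\|_{\infty}/\alpha}$ (together with the subsolution $0$, since $r,u\geq 0$) yields $0\leq w\leq\alpha^{-1}\|r\|_{\infty}e^{\theta\|r\|_{\infty}/\alpha}$, which implies (\ref{bound2}). I expect the main obstacle to be the rigorous justification of this differentiation together with the up-to-boundary regularity: the selector $v^\ast$ is only measurable, so the clean computation above is formal and must be replaced by a difference-quotient argument, estimating $[u(\theta+h,\cdot)-u(\theta,\cdot)]/h$ by the comparison principle and letting $h\to 0$; it is in this rigorous version---and in the Schauder estimates up to $\partial D_{lm}$ for the oblique-derivative problem with merely H\"older, selector-dependent coefficients---that the non-sharp constant $3e^{(\theta+3)\|r\|_{\infty}/\alpha}$ in (\ref{bound2}) originates.
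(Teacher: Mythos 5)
Your proposal is correct in substance, but it proves the two estimates by a genuinely different route than the paper. For existence and uniqueness the paper simply reverses time ($\theta = 1-t$, rather than your exponential substitution) and invokes Theorem 7.4 of Ladyzenskaya--Solonnikov--Ural'ceva for quasilinear parabolic equations with conormal boundary data, noting that the Hamiltonian $\inf_{v}[\,b(x,v)\cdot p + \theta r(x,v)u\,]$ inherits Lipschitz continuity from (A1); your fixed-point-plus-Schauder construction essentially re-proves that quasilinear theorem by hand, which is more self-contained but buys nothing here. The real divergence is in the bounds: the paper never touches a comparison principle. Instead it applies It\^o's formula to $e^{\int_0^t \theta_s r(X_s,v_s)\,ds}\,u^{\kappa}_{\alpha,lm}(\theta_t,X_t)$ for the diffusion reflected in $\overline{D}_{lm}$ and derives the stochastic representation (\ref{representation1}) of $u^{\kappa}_{\alpha,lm}$ as the value of a finite-horizon exponential-cost problem with horizon $T_{\kappa} = \alpha^{-1}\ln(\theta/\kappa)$; the sup bound (\ref{bound1}) is then read off trivially, and (\ref{bound2}) is obtained (following Biswas--Borkar--Suresh) by perturbing $\theta$ to $\theta+\epsilon$ \emph{and} the horizon to $T^{\epsilon}_{\kappa}$, splitting the difference of value functions by the triangle inequality into two pieces whose elementary exponential estimates produce exactly the constant $3e^{(\theta+3)\|r\|_{\infty}/\alpha}\|r\|_{\infty}/\alpha$. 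So your closing attribution of that constant to Schauder or difference-quotient losses is factually wrong about the paper --- it comes from the $1+2$ split in the probabilistic Lipschitz estimate --- though this is immaterial since the lemma only asserts an upper bound. The representation approach has a structural advantage you should note: (\ref{representation1}) is reused verbatim in Theorem 2.3 to identify $u_{\alpha}$ with the value function, so the paper gets the identification and the estimates from one computation. Conversely, your analytic route is valid and in one respect better: your supersolution comparisons are correct as stated (the positive zeroth-order term $\theta r u$ is harmless on a bounded $\theta$-interval by the usual $e^{-\lambda\theta}$ transformation), and your linearized bound $0 \le \partial u/\partial\theta \le \alpha^{-1}\|r\|_{\infty}e^{\theta\|r\|_{\infty}/\alpha}$, once rigorized, is sharper than (\ref{bound2}). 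For the rigorization, rather than working with the merely measurable selector $v^{\ast}$, estimate the difference quotient directly using $|\inf_{v}A(v)-\inf_{v}B(v)| \le \sup_{v}|A(v)-B(v)|$, which avoids selectors altogether; with that repair your argument closes.
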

\begin{proof} For the existence and uniqueness result we  use  Theorem 7.4 from [\cite{lady},  p. 491]. 
Set
  $$
  \theta=1-t, \ u^{\kappa}_{\alpha,lm}(\theta,x)= u(t,x).
  $$
  Then equation (\ref{D_m}) reduce to
  \begin{eqnarray*}
        \displaystyle{
     -\alpha(1-t)\frac{\partial u}{\partial t} } &=& 
    \displaystyle{ \inf_{v \in U} \left[ b(x,v)\cdot 
    \nabla u + (1-t) r(x,v)u\right]  +\frac{1}{2} \,  \mbox{trace} (a(x)\nabla^2 u) } \\
    \displaystyle{ u(1-\kappa,x) } &=& \displaystyle{ e^{\frac{\kappa \|r\|_{\infty}}{\alpha}}, 
\mbox{ for } x \in \overline{D}_{m} \ \ , \ \ 
    \nabla u(t,x).\gamma(x)=0 \mbox{ on } (0,1-\kappa)\times \partial D_{m} }  \\ 
       \end{eqnarray*}
Rewrite the above equation as
\begin{eqnarray}\label{conormal4Dml}
        \displaystyle{
     0 } &=& 
    \displaystyle{ \frac{\partial u}{\partial t} + \inf_{v \in U} \left[ \frac{ b(x,v)\cdot 
    \nabla u}{\alpha(1-t)} + \frac{1}{\alpha} r(x,v)u\right]  +\frac{1}{2} \, 
    \mbox{trace} \left(\frac{a(x) }{\alpha(1-t)} \nabla^2 u \right) }  \nonumber \\
   \displaystyle{ u(1-\kappa,x) } &=& \displaystyle{ e^{\frac{\kappa \|r\|_{\infty}}{\alpha}}, \mbox{ for } x \in \overline{D}_{lm} \ \ , \ \ 
    \nabla u(t,x).\gamma(x)=0 \mbox{ on } (0,1-\kappa)\times \partial D_{lm} }. \nonumber \\ 
        \end{eqnarray}
 Set
 \begin{eqnarray}
  b(t,x,u,p) &=& \inf_{v\in U} \left[ \frac{b(x,v)\cdot p}{\alpha(1-t)} +\frac{1}{\alpha} r(x,v) u\right] \label{hamiltonian}\\
  a_{ij}(x,t)&=&\frac{a_{ij}(x)}{2\alpha(1-t)} \nonumber \\
 T &=& 1-\kappa \nonumber \\
 Q_T&=& D_{lm} \times [0,T]\nonumber \\
  \psi_0(x)&=& e^{\frac{\kappa \|r\|_{\infty} }{\alpha}} \nonumber.
 \end{eqnarray}
Note that $ b(t,x,u,p)$ and $a_{ij}(x,t)$ are Lipschitz continuous in $x$, since $ b(x,v),$ $ r(x,v), a_{ij}(x)$ are Lipschitz continuous in the first argument uniformly with respect to the second.

 Therefore from [\cite{lady}, Theorem 7.4, p. 491] it follows that (\ref{conormal4Dml}) has a unique solution in $  H^{\frac{3}{2},3}([\kappa,1]\times \overline{D}_{lm})$.

Let  $v(\cdot)$ be an admissible control and  $X(\cdot)$ be the process given by
 \[
 \begin{array}{rcl}
 \displaystyle{
  dX_t} &=& \displaystyle{ b(X_t,v_t)dt+\sigma(X_t) dW_t - \gamma(X_t)d\xi_t } \\
 \displaystyle{ d\xi_t }&=&  \displaystyle{  I_{\{X_t\in \partial D_{lm}\}} d\xi_t  } \\
 \displaystyle{ \xi_0 } &=& \displaystyle{  0,\ \ \ \ X_0 = x\in \overline{D}_{lm}\, . } 
 \end{array}
 \]
  Applying It$\hat{{\rm o}}$'s formula to $\displaystyle {
 e^{\int_{0}^{t} \theta_s r(X_s,v_s) ds} \, u^{\kappa}_{\alpha,lm}(\theta_t , X_t) }, \theta_t \ = \ 
 \theta e^{-\alpha t},$ we get
 \begin{eqnarray*}
  d\left( e^{\int_{0}^{t} \theta_s r(X_s,v_s) ds} u^{\kappa}_{\alpha,lm}(\theta_t , X_t)\right) &=& 
  e^{\int_{0}^{t} \theta_s r(X_s,v_s) ds} du^{\kappa}_{\alpha,lm}(\theta_t , X_t) \\
 &&+  \theta_t u^{\kappa}_{\alpha,lm}(\theta_t , X_t) e^{\int_{0}^{t} \theta_s r(X_s,v_s) ds} r(X_t,v_t) dt,
 \end{eqnarray*}
 where
 \begin{eqnarray*}
 du^{\kappa}_{\alpha,lm}(\theta_t , X_t) &=&  (\nabla u^{\kappa}_{\alpha,lm}(\theta_t, X_t))^\bot 
 \sigma(X_t) dW_t - \left[\gamma(X_t)\cdot \nabla u^{\kappa}_{\alpha,lm}
 (\theta_t, X_t)\right] I_{\{X_t\in\partial D_{lm}\}} d\xi_t \\ 
 && + \,  \Big[ {\mathcal L} \, u^{\kappa}_{\alpha, lm} (\theta_t, X_t, v_t) \, 
 - \alpha \theta_t \frac{\partial}{\partial \theta} u^{\kappa}_{\alpha,lm}(\theta_t , X_t) \Big] dt .
 \end{eqnarray*}
 Using the fact that $u^{\kappa}_{\alpha,lm}$ satisfy the equation (\ref{D_m}), we have 
 \[
  u^{\kappa}_{\alpha,lm}(\theta, x) \ \leq \ 
 E_x^v\Big[ e^{\frac{\kappa \|r\|_{\infty}}{\alpha}}
 e^{\int_{0}^{T_{\kappa}} \theta e^{-\alpha s} r(X_s,v_s) ds}\Big] \, , 
 \]
 where $\displaystyle{ T_{\kappa}=\frac{ln(\frac{\theta}{\kappa})}{\alpha} }$. 
 Repeating the above argument with a minimizing selector in (\ref{D_m}), we get 
 \begin{equation}\label{representation1}
  u^{\kappa}_{\alpha,lm}(\theta, x) \ = \ \inf_{v(\cdot)} 
 E_x^v\Big[ e^{\frac{\kappa \|r\|_{\infty}}{\alpha}}
 e^{\int_{0}^{T_{\kappa}} \theta e^{-\alpha s} r(X_s,v_s) ds}\Big] \,.
 \end{equation}
 From (\ref{representation1}), we have
 \begin{eqnarray*}
  |u^{\kappa}_{\alpha,lm}| \leq E_x^v\Big[ e^{\frac{\kappa \|r\|_{\infty}}{\alpha}}
 e^{\int_{0}^{T_{\kappa}} \theta e^{-\alpha s} r(X_s,v_s) ds} \,  \Big] 
 \leq  e^{\frac{\kappa \|r\|_{\infty}}{\alpha}} e^{\|r\|_{\infty} \frac{(\theta -\kappa)}{\alpha} },
 \end{eqnarray*}
 which proves the estimate (\ref{bound1}). 

We mimic the arguments of [\cite{anup_borkar_suresh}, Theorem 3.1], to prove the estimate (\ref{bound2}).
For $\epsilon$ with 
$|\epsilon|$ sufficiently small, set
$$
T^{\epsilon}_{\kappa} = \frac{1}{\alpha} \log\left(\frac{\theta + \epsilon}{\kappa}\right).
$$ 
Now consider for each $v(\cdot)$ admissible
\begin{equation}\label{inter}
 \left.
\begin{array}{lll}
 \displaystyle{ \left| E^v_x\left[ e^{ (\theta+\epsilon) \int_{0}^{T^{\epsilon}_{\kappa}}  e^{-\alpha t} r(X_t, v_t) dt }
\right] - E^v_x\left[ e^{ \theta \int_{0}^{T_{\kappa}}  e^{-\alpha t} r(X_t, v_t) dt }\right] \right| } \\
\leq \displaystyle{ \left| E^v_x\left[ e^{ (\theta+\epsilon) \int_{0}^{T^{\epsilon}_{\kappa} } e^{-\alpha t} r(X_t, v_t) dt }\right]
 - E^v_x\left[ e^{ \theta \int_{0}^{T^{\epsilon}_{\kappa}}  e^{-\alpha t} r(X_t, v_t) dt }\right] \right| } \\
\ \ + \displaystyle{ \left| E^v_x\left[ e^{ \theta \int_{0}^{T^{\epsilon}_{\kappa} } e^{-\alpha t} r(X_t, v_t) dt }\right] -
E^v_x\left[ e^{ \theta \int_{0}^{T_{\kappa}}  e^{-\alpha t} r(X_t, v_t) dt } \right]\right| }
\end{array}
\right\}.
\end{equation}
Now 
\begin{equation}\label{inter1}
 \left.
\begin{array}{lll}
 \displaystyle{ \left| E^v_x\left[ e^{ (\theta+\epsilon) \int_{0}^{T^{\epsilon}_{\kappa} } e^{-\alpha t} r(X_t, v_t) dt }\right]
 - E^v_x\left[ e^{ \theta \int_{0}^{T^{\epsilon}_{\kappa}}  e^{-\alpha t} r(X_t, v_t) dt }\right] \right| } \\
\leq \displaystyle{  E^v_x\left[ e^{ \theta \int_{0}^{T^{\epsilon}_{\kappa} } e^{-\alpha t} r(X_t, v_t) dt } 
\times \left| e^{ \epsilon \int_{0}^{T^{\epsilon}_{\kappa}}  e^{-\alpha t} r(X_t, v_t) dt } -1 \right| \right] } \\
\leq \displaystyle{   e^{ \frac{\theta \|r\|_{\infty} } {\alpha} \left(1-\frac{\kappa} {\epsilon +\theta}\right)  } 
\times E^v_x \left| e^{ \epsilon \int_{0}^{T^{\epsilon}_{\kappa}}  e^{-\alpha t} r(X_t, v_t) dt } -1 \right|  } \\
\leq e^{\frac{(\theta+\epsilon)\|r\|_{\infty}}{\alpha}} \frac{\|r\|_{\infty}}{\alpha} |\epsilon|
\end{array}
\right\},
\end{equation}
and 
\begin{eqnarray*}
 \left.
\begin{array}{lll}
 \displaystyle{ \left| E^v_x\left[ e^{ \theta \int_{0}^{T^{\epsilon}_{\kappa} } e^{-\alpha t} r(X_t, v_t) dt }\right] -
E^v_x\left[ e^{ \theta \int_{0}^{T_{\kappa}}  e^{-\alpha t} r(X_t, v_t) dt } \right] \right| } \\
\leq \displaystyle{  E^v_x\left[ e^{ \theta \int_{0}^{T_{\kappa} } e^{-\alpha t} r(X_t, v_t) dt } 
\times \left| e^{ \theta \left| \int_{ T_{\kappa} }^{T^{\epsilon}_{\kappa}}  e^{-\alpha t} r(X_t, v_t) dt \right|}  -1 \right| 
 \right] } \\
\leq \displaystyle{ e^{\frac{\theta\|r\|_{\infty}}{\alpha}}\left[ e^{\frac{\theta\|r\|_{\infty}}{\alpha} \left|
 e^{-\alpha T_{\kappa} } - e^{-\alpha T^{\epsilon}_{\kappa}} \right|}  - 1 \right] }\\
= \displaystyle{ e^{\frac{\theta\|r\|_{\infty}}{\alpha}} \left[ e^{ \frac{\|r\|_{\infty}}{\alpha} \left|
\frac{\kappa \epsilon} {\theta +\epsilon} \right| } - 1\right].  }
\end{array}
\right.
\end{eqnarray*}
Note that for each $\theta >0$, when $\epsilon$ is positive, then $ \displaystyle{ \frac{\kappa\epsilon}{\theta+\epsilon} 
\leq 1 }$ and for $\epsilon <0$ we can choose a $0<\epsilon_{\theta} <1$ such that $ \displaystyle{ \frac{\kappa\epsilon}
{\theta+\epsilon} \leq 2 }$ whenever $|\epsilon| \leq \epsilon_{\theta}$. Hence we have
\begin{equation}\label{inter2}
 \left.
\begin{array}{lll}
 \displaystyle{ \left| E^v_x\left[ e^{ \theta \int_{0}^{T^{\epsilon}_{\kappa} } e^{-\alpha t} r(X_t, v_t) dt }\right] -
E^v_x\left[ e^{ \theta \int_{0}^{T_{\kappa}}  e^{-\alpha t} r(X_t, v_t) dt } \right] \right| } \\
\leq \displaystyle{ e^{\frac{\theta\|r\|_{\infty}}{\alpha}} \left[ e^{ \frac{2 \|r\|_{\infty}}{\alpha} |\epsilon|} - 1\right] 
 \ \mbox{ whenever } |\epsilon|\leq \epsilon_{\theta}  } \\
\leq \displaystyle{ e^{\frac{\theta\|r\|_{\infty}}{\alpha}} \frac{2\|r\|_{\infty}}{\alpha} |\epsilon|
 e^{ \frac{2 \|r\|_{\infty}}{\alpha} |\epsilon|}  \ \mbox{ whenever } |\epsilon|\leq \epsilon_{\theta} } \\
= \displaystyle{ 2 e^{\frac{(\theta+2)\|r\|_{\infty}}{\alpha}} \frac{\|r\|_{\infty}}{\alpha} |\epsilon|
   \ \mbox{ whenever } |\epsilon|\leq \epsilon_{\theta} }
\end{array}
\right\}.
\end{equation}
From (\ref{representation1}), (\ref{inter}), (\ref{inter1}) and (\ref{inter2}) we have
\begin{eqnarray*}
 | u^{\kappa}_{\alpha,lm}(\theta+\epsilon,x)- u^{\kappa}_{\alpha,lm}(\theta,x)| &\leq& 
e^{\frac{\kappa\|r\|_{\infty}}{\alpha}} \sup_{v(\cdot)}\left| E^v_x\left[ e^{ (\theta+\epsilon) \int_{0}^{T^{\epsilon}_{\kappa}}  
e^{-\alpha t} r(X_t, v_t) dt }\right] \right. \\
&& \ \ \ \ \left. - E^v_x\left[ e^{ \theta \int_{0}^{T_{\kappa}}  e^{-\alpha t} r(X_t, v_t) dt }\right] \right| \\
&\leq&  3 e^{\frac{(\theta+3)\|r\|_{\infty}} {\alpha}} \frac{\|r\|_{\infty}}{\alpha} |\epsilon|
   \ \mbox{ whenever } |\epsilon|\leq \epsilon_{\theta}.
\end{eqnarray*}
This completes the proof of the lemma.
\end{proof}

\begin{theorem}\label{Thm_discount_hjb}
 Assume (A1) and (A2). Then equation (\ref{discount_hjb}) has a solution 
 $u_{\alpha} \in  W^{1,2,p}_{loc}((0, \, 1) \times \overline{D} ), \, p \geq 2$.
\end{theorem}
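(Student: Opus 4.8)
The plan is to construct $u_{\alpha}$ as a limit of the approximate solutions $u^{\kappa}_{\alpha,lm}$ produced in Lemma \ref{lemma_estimate}, letting the regularization parameter $\kappa\downarrow 0$ while exhausting the orthant by $D_{lm}\uparrow\overline{D}$. The two a priori bounds (\ref{bound1}) and (\ref{bound2}) are uniform in $\kappa,l,m$, and they are exactly what is needed first to upgrade to a uniform local $W^{1,2,p}$ bound and then to extract a convergent subsequence. The singularity of (\ref{discount_hjb}) at $\theta=0$ is never used as a parabolic time direction; instead I treat $\theta$ as a parameter and recover the behaviour at $\theta=0$ only through the Lipschitz control (\ref{bound2}).

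First I would promote the two sup-norm bounds to a uniform local $W^{1,2,p}$ estimate. Fix $p\ge 2$, a number $\theta_1\in(0,1)$, and a bounded smooth subdomain $G$ with $\overline{G}\subset\overline{D}$; for $l,m$ large one has $\overline{G}\subset\overline{D}_{lm}$. Choosing a measurable minimizing selector $\hat v(\theta,x)$ in the infimum of (\ref{D_m}) (which exists since $U$ is compact and the integrand is continuous in $v$), the function $u^{\kappa}_{\alpha,lm}$ solves, for each fixed $\theta$, the \emph{linear} oblique--derivative elliptic problem
\[
\tfrac12\,\mathrm{trace}\big(a(x)\nabla^2 u^{\kappa}_{\alpha,lm}\big)+b(x,\hat v)\cdot\nabla u^{\kappa}_{\alpha,lm}+\theta\, r(x,\hat v)\,u^{\kappa}_{\alpha,lm}=\alpha\theta\,\frac{\partial u^{\kappa}_{\alpha,lm}}{\partial\theta},\qquad \nabla u^{\kappa}_{\alpha,lm}\cdot\gamma=0\ \text{on}\ \partial D_{lm},
\]
whose coefficients are bounded uniformly by (A1) and whose right-hand side is bounded uniformly by (\ref{bound2}). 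Since $a$ is uniformly elliptic with constant $\delta$ and (A2) gives the uniform obliqueness $\gamma\cdot n_m\ge\eta$, the interior and up-to-the-boundary elliptic $L^p$ estimates (Calder\'on--Zygmund / Agmon--Douglis--Nirenberg type for oblique problems) yield $\|u^{\kappa}_{\alpha,lm}(\theta,\cdot)\|_{W^{2,p}(G)}\le C$, with $C$ depending only on $p,\delta,\theta_1$, the coefficient bounds, $\eta$ and $G$, but not on $\kappa,l,m$ or $\theta\in[\theta_1,1]$. Integrating in $\theta$ and adding the uniform $L^\infty$ bound (\ref{bound2}) on $\partial u^{\kappa}_{\alpha,lm}/\partial\theta$ gives a uniform bound in $W^{1,2,p}((\theta_1,1)\times G)$.

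Next I would pass to the limit. Running a diagonal argument over an exhaustion of $(0,1)\times\overline{D}$ by sets $(\theta_1,1)\times G$ with $\theta_1\downarrow 0$, $G\uparrow\overline{D}$, and along sequences $\kappa\downarrow 0$, $l,m\to\infty$, weak $W^{1,2,p}_{loc}$ compactness produces a limit $u_{\alpha}\in W^{1,2,p}_{loc}((0,1)\times\overline{D})$; by the compact embedding $W^{2,p}\hookrightarrow C^{1,\beta}$ for $p>d$ together with the $\theta$-equicontinuity from (\ref{bound2}) and Arzel\`a--Ascoli, the convergence of $u^{\kappa}_{\alpha,lm}$ and of $\nabla u^{\kappa}_{\alpha,lm}$ is locally uniform, while $\nabla^2 u^{\kappa}_{\alpha,lm}$ and $\partial_\theta u^{\kappa}_{\alpha,lm}$ converge weakly in $L^p_{loc}$. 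The only nonlinear term, the Hamiltonian $\inf_{v}[b(x,v)\cdot\nabla u+\theta r(x,v)u]$, is continuous (indeed Lipschitz) in $(\nabla u,u)$ uniformly on compacta because $b,r$ are bounded continuous and $U$ is compact; hence the locally uniform convergence of $(u^{\kappa}_{\alpha,lm},\nabla u^{\kappa}_{\alpha,lm})$ lets it pass to the limit, and the linear terms pass by weak convergence, so $u_{\alpha}$ satisfies (\ref{discount_hjb}) a.e.\ on $(0,1)\times D$. The oblique condition $\nabla u_{\alpha}\cdot\gamma=0$ survives on the smooth faces of $\partial D$ by the local $C^1$ convergence up to the boundary, and the initial condition $u_{\alpha}(0,x)=1$ follows from $u^{\kappa}_{\alpha,lm}(\kappa,x)=e^{\kappa\|r\|_{\infty}/\alpha}\to 1$ combined with the uniform $\theta$-Lipschitz bound (\ref{bound2}), which forces $|u_{\alpha}(\theta,x)-1|\le C\theta$. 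Finally, passing to the limit in the stochastic representation (\ref{representation1}) (using $T_\kappa\to\infty$ and $D_{lm}\uparrow\overline{D}$) identifies $u_{\alpha}$ with the value function (\ref{discounted_value}).

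I expect the main obstacle to be the estimate of the second paragraph: securing the up-to-the-boundary $W^{2,p}$ oblique-derivative bound with a constant uniform in $m$. The boundaries $\partial D_{lm}$ smoothly round off the corners and edges of the orthant, so one must verify that the obliqueness constant $\eta$ and the relevant $C^{1}$ data of $\partial D_{lm}$, hence the $L^p$ constant, do not degenerate as $m\to\infty$ for compacta meeting the (lower-dimensional) singular part of $\partial D$. All other steps are routine given (\ref{bound1}), (\ref{bound2}) and the continuity of the Hamiltonian.
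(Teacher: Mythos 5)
Your proposal follows essentially the same route as the paper: start from the approximations $u^{\kappa}_{\alpha,lm}$ of Lemma \ref{lemma_estimate}, use the uniform bounds (\ref{bound1})--(\ref{bound2}) to get local $W^{1,2,p}$ estimates independent of $\kappa,l,m$ (the paper invokes Theorem 9.11 of Gilbarg--Trudinger to assert (\ref{sobolevestimate}); your device of freezing $\theta$, treating $\alpha\theta\,\partial_\theta u$ as a bounded right-hand side of a linear oblique elliptic problem, and integrating in $\theta$ is the natural way to make that step explicit), then diagonalize in $l,m$, upgrade to locally uniform $C^1$ convergence by compact Sobolev/Morrey embedding so the Hamiltonian passes to the limit, and recover the oblique condition on $\partial D$ from $C^1$ convergence up to the boundary. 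The one genuinely different sub-step is the passage $\kappa\downarrow 0$: the paper follows Menaldi--Robin, extending $u^{\kappa}_{\alpha}$ by the constant $e^{\kappa\|r\|_{\infty}/\alpha}$ on $\{\theta\le\kappa\}$, showing the extension solves the perturbed equation (\ref{extended_solution}) with a source supported in $\{\theta\le\kappa\}$, and passing to the limit in the weak formulation (\ref{test}), where the extra term vanishes; you instead exhaust $(0,1)$ by $(\theta_1,1)$ and read off $u_{\alpha}(0,\cdot)=1$ from the uniform $\theta$-Lipschitz bound (\ref{bound2}), which forces $|u_{\alpha}(\theta,x)-1|\le C\theta$. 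Your variant is more elementary and avoids the extension bookkeeping, while the paper's yields the distributional equation on all of $(0,1)\times D$ in one stroke; both hinge on (\ref{bound2}). Finally, the obstacle you flag is real and is exactly where the paper is terse: since $D_m\downarrow\overline{D}$, a compact set of $\overline{D}$ touching $\partial D$ lies within distance of order $1/m$ of $\partial D_{lm}$, so purely interior estimates cannot give $m$-uniform constants there; the uniform obliqueness $\gamma\cdot n_m\ge\eta$ over all $\partial D_m$ imposed in (A2)(i) is precisely what an $m$-uniform up-to-the-boundary oblique $W^{2,p}$ estimate, as in your plan, must exploit --- so your proposal, including its self-identified hard point, is faithful to (indeed slightly more candid than) the paper's argument.
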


\begin{proof} Let $C  $ be an  open bounded  set with $C^\infty$ boundary such that  
$\overline{C} \subseteq \overline{D}.$ 
Let $N$ be a positive integer such that $$C \subseteq \overline{D}_{lm}, \ {\rm for \ all \ } m\geq 1, \
l \geq N . $$ 
From Lemma \ref{lemma_estimate}, p.d.e. (\ref{D_m}) has a unique solution 
 $u^{\kappa}_{\alpha,lm} \in     H^{\frac{3}{2},3}([\kappa,1]\times \overline{D}_{lm}) $ and   
\begin{equation}
 \|u^{\kappa}_{\alpha,lm}\|_{\infty; (\kappa, \ 1) \times D_{lm}}\leq e^{\frac{\theta \|r\|_{\infty}}{\alpha}}, \ 
\forall\ \kappa >0 \ \& \ m,l\geq 1. \nonumber
\end{equation}
Thus from Theorem 9.11, p.235 of \cite{gilbarg_trudinger}, we get
  \begin{equation}\label{sobolevestimate}
  \|u^{\kappa}_{\alpha,lm}\|_{1,2,p ; ((\kappa,1) \times \overline{C}) } < K, \ {\rm for \ all \ }  m \geq 1,
 l\geq N,  p \geq 2,
 \end{equation}
 where $K$ does not depend on $l$ and $m$. 
Now choose a sequence of open domains $\{ C_n\}$  from $\overline{D}$  such that 
$\cup_n \overline{C}_n = \overline{D}$. Now by a standard diagonalization procedure there exists 
$u^{\kappa}_{\alpha} \in W^{1,2,p}_{loc} ( (\kappa , 1) \times \overline{D})$ such that 
 along a  subsequence in $l \to \infty$,
\begin{equation}\label{weak}
u^{\kappa}_{\alpha,lm} \longrightarrow u^{\kappa}_{\alpha} \ \ \ \mbox{weakly in }  
\ W^{1,2,p}((\kappa,1) \times C) .
\end{equation}
Now from (\ref{sobolevestimate}), we have 
 \begin{equation}\label{sobolevestimate1}
  \|u^{\kappa}_{\alpha,m}\|_{1,2,p ; ((\kappa,1) \times \overline{C}) } < K, \ {\rm for \ all \ }  m \geq 1.
 \end{equation}
Now by repeating  the diagonalization argument there exists $u^\kappa_\alpha \in 
W^{1,2,p}_{loc} ( (\kappa , 1) \times \overline{D})$ such that along a subsequence in $m \to \infty$
\begin{equation}\label{weak1}
u^{\kappa}_{\alpha,m} \longrightarrow u^{\kappa}_{\alpha} \ \ \ \mbox{weakly in }  
\ W^{1,2,p}((\kappa,1) \times \overline{C}) .
\end{equation}
Using  parabolic version of the Morrey's lemma, see [\cite{WuYin}, pp.26-27],  $W^{1,2,p}((\kappa,1)\times C)$ is compactly embedded in  $H^{\frac{\hat{\alpha}}{2}, \hat{\alpha} } ([ \kappa, 1 ] \times 
\overline{C} ), 0 < \hat{\alpha} < 2 - \frac{d+2}{p}  $. 
Hence along a subsequence of $l \to \infty, m \to \infty$, we get 
\begin{equation}\label{C0}
\lim_{m \to \infty} \lim_{l \to \infty}u^{\kappa}_{\alpha,lm} \ = \  u^{\kappa}_{\alpha} \ \
 \mbox{where\ the\  convergence\ is\ in } 
H^{\frac{\hat{\alpha}}{2}, \hat{\alpha} } ([ \kappa, 1 ] \times \overline{C} ) .
\end{equation}

Now (\ref{C0}) implies (along a subsequence in $l, m \to \infty$)
\begin{equation}\label{nonlinear}
 \lim_{m \to \infty} \lim_{l \to \infty} \inf_{v} \left[ b(x,v)\cdot 
\nabla u^{\kappa}_{\alpha,m} + \theta r(x,v)u^{\kappa}_{\alpha,m}\right]
\ = \  \inf_{v} \left[ b(x,v)\cdot 
\nabla u^{\kappa}_{\alpha} + \theta r(x,v)u^{\kappa}_{\alpha}\right] \mbox{ in } [\kappa,1] \times 
\overline{C}.
\end{equation}

By letting (along a subsequence) 
$l \to \infty$ and then  $m\longrightarrow \infty$  in (\ref{D_m}), with the help of (\ref{weak}) and (\ref{nonlinear}), we get
\begin{equation}
 \alpha\theta\frac{\partial u^{\kappa}_{\alpha}}{\partial \theta}=  \inf_{v} \left[ b(x,v)\cdot 
\nabla u^{\kappa}_{\alpha} + \theta r(x,v)u^{\kappa}_{\alpha}\right] +\frac{1}{2}
\mbox{trace}(a(x)\nabla^2 u^{\kappa}_{\alpha}) \mbox{ in } (\kappa,1)\times D 
\end{equation}
in the sense of distribution and $u^{\kappa}_{\alpha} \in W^{1,2, p}([\kappa,1]\times C) $ for any
compact subset $\overline{C}$ of $\overline{D}$ with $C^\infty$ smooth boundary.  
Also from (\ref{C0}) it follows that $u^{\kappa}_{\alpha}(\kappa,x) = e^{\frac{\kappa \|r\|_{\infty}}{\alpha}}$.
\\
 \paragraph{}  Since
\[
\nabla u^\kappa_{\alpha, m} \cdot \gamma \equiv 0 \ {\rm on} \ \partial C
\]
it follows that 
\[
\nabla u^\kappa_{\alpha} \cdot \gamma \equiv 0 \ {\rm on} \ \partial C \cap \partial D.
\]
Since $C$ is arbitrarily choosen, it follows that
 $\nabla u^\kappa_{\alpha} \cdot \gamma \equiv 0 \ {\rm on}  \ \partial D $ a.e. 

This proves that (\ref{discount_hjb}) has a solution  $ u^{\kappa}_{\alpha} \in W^{1,2, p}_{loc}((\kappa,1)\times \overline{D} ) \cap C^{\hat{\alpha}/2, \hat{\alpha}}
((0, \, 1) \times \overline{D} ), \ p\geq 2$. 
\paragraph{} Following the arguments in [\cite{menaldi_robin}, Proposition 3.2], extend the function $u^{\kappa}_{\alpha}$ 
to whole of $(0,1)$ as follows:
\begin{equation*}
\bar u^{\kappa}_{\alpha}(\theta,x)=\left\{ \begin{array}{ll}
u^{\kappa}_{\alpha}(\theta,x) & \mbox{if $\theta > \kappa$ } \\  
  \displaystyle{ e^{\frac{\kappa \|r\|_{\infty}}{\alpha} } } & \mbox{if $ 0\leq \theta \leq \kappa$ }.
\end{array}\right.
\end{equation*}
Then it follows that, $\bar u^{\kappa}_{\alpha}$ is nonnegative, bounded, continuous,
$$
\sup_{0<\kappa<1} \Big\| \frac{\partial \bar u^{\kappa}_{\alpha}}{\partial \theta}  
\Big\|_{\infty ; (0,1)\times  \overline{D}} < \infty.
$$
and for each compact $\overline{C} \subset \overline{D}$, 
$$
\sup_{0<\kappa<1} \| \bar u^{\kappa}_{\alpha} \|_{2,p ; \overline{C}} < \infty,
$$
 for each $0< \theta < 1$.
The function $\bar u^{\kappa}_{\alpha}$ is a solution in almost everywhere sense to the following p.d.e
\begin{eqnarray}\label{extended_solution}
\left.
\begin{array}{rcl}
\displaystyle{
 \alpha\theta\frac{\partial \bar u^{\kappa}_{\alpha}}{\partial \theta} } &=& \displaystyle{ \inf_{v} \left[ b(x,v)\cdot 
\nabla \bar u^{\kappa}_{\alpha} + \theta r(x,v)\bar u^{\kappa}_{\alpha}\right] +\frac{1}{2}\mbox{trace}
(a(x)\nabla^2 \bar u^{\kappa}_{\alpha}) } \\
&&- \displaystyle{ \theta e^{\frac{\kappa \|r\|_{\infty}}{\alpha}}\inf_{v\in V} \{ r(x,v) \} I_{\{\theta\leq \kappa\}} }\\
\displaystyle{ \bar u^{\kappa}_{\alpha}(0,x) } &=& \displaystyle{ 1 , \ 
\ \nabla \bar u^{\kappa}_{\alpha}.\gamma=0 \mbox{ on } \partial  D. }
\end{array}
\right\}
\end{eqnarray}
Hence $\bar u^{\kappa}_{\alpha} \in W^{1,2,p}_{loc}((0,1)\times \overline{D}  ) $  is a weak solution to 
(\ref{extended_solution}).
So multiply equation (\ref{extended_solution}) with a test function $\hat \phi \in 
C^{\infty}_c((0,1)\times \overline{D})$ 
and integrate over $(0,1)\times \overline{D}$ we get
\begin{eqnarray}\label{test}
-\alpha \int_0^1 \theta \left\langle \frac{ \partial \bar u^{\kappa}_{\alpha} }{\partial \theta}, \hat \phi\right\rangle d\theta  +
 \int_0^1 \left\langle \inf_{ v\in U} \{b(x,v) \cdot \nabla \bar u^{\kappa}_{\alpha}+ 
 \theta r(x,v)\bar u^{\kappa}_{\alpha} \}, \hat \phi \right\rangle d\theta  \nonumber \\
  + \frac{1}{2} \int_0^1 \langle \mbox{trace}(a(x) \nabla^2 \bar u^{\kappa}_{\alpha}), \hat \phi \rangle d\theta  = 
\int_0^{\kappa} \left\langle \inf_{v\in U} \{ \theta r(x,v) e^{\frac{\kappa \|r\|_{\infty}}{\alpha}} \}, \hat \phi
\right\rangle d\theta, \nonumber \\
\end{eqnarray}
where $\langle \cdot, \cdot\rangle$ is inner product on $L^2(\overline{D})$. 
By letting $\kappa \longrightarrow 0$ in above, we obtain
\begin{eqnarray*}
-\alpha \int_0^1 \theta \left\langle \frac{ \partial  u_{\alpha} }{\partial \theta}, \hat \phi\right\rangle d\theta  &+&
 \int_0^1 \left\langle \inf_{ v\in U} \{b(x,v) \cdot \nabla  u_{\alpha}+ 
 \theta r(x,v) u_{\alpha} \}, \hat \phi \right\rangle d\theta \\
 & + & \frac{1}{2} \int_0^1 \langle \mbox{trace}(a(x) \nabla^2  u_{\alpha}), \hat \phi \rangle d\theta  = 
0,
\end{eqnarray*}
where $ u_{\alpha} \in W^{1,2,p}_{loc}((0,1)\times  D ), p\geq 2$. Therefore we have
\begin{equation*}
\begin{array}{rcl}
\displaystyle{
 \alpha\theta\frac{\partial u_{\alpha}}{\partial \theta} } &=& \displaystyle{ \inf_{v \in U} 
\left[ b(x,v)\cdot 
\nabla u_{\alpha} + \theta r(x,v)u_{\alpha}\right] 
+ \frac{1}{2}trace (a(x)\nabla^2 u_{\alpha})  } \\
\displaystyle{ u_{\alpha}(0,x) } &=& \displaystyle{ 1 \mbox{ on } \overline {D}. }
\end{array}
\end{equation*}
Let $\theta \in (0,1)$ and $B$ be an open and bounded subset of $D$ with Lipschitz boundary such that its closure in $\overline{D}$ contains the part of the boundary of $D$. Clearly $\bar u^{\kappa}_{\alpha}(\theta,\cdot)$ and $u_{\alpha}(\theta,\cdot) \in  W^{2,p}( B)$. By Morrey Lemma, see [\cite{leoni}, pp. 335-339], we get $ W^{2,p}(B)$ is compactly contained in $C^{1,\hat \alpha}(\overline B)$. Hence for each fixed $\theta>0$, we have
\begin{equation*}
\bar u^{\kappa}_{\alpha}(\theta,\cdot) \longrightarrow u_{\alpha}(\theta,\cdot) \ \ \mbox{ in } C^{1,\hat \alpha}(\overline B).
\end{equation*}
Which implies that $\triangledown u_{\alpha} \cdot \gamma=0$ since $\displaystyle{  \nabla \bar u^{\kappa}_{\alpha} \cdot \gamma=0 \ \mbox{on} \ \partial D.}$ 

 Hence we have the existence of a weak solution
$ u_{\alpha} \in W^{1,2,p}_{loc}((0,1)\times  \overline{D}  ), p\geq 2 $ 
for the equation (\ref{discount_hjb}). 
This completes the proof. 
\end{proof} 

Now we prove the existence of optimal  control for the discounted risk-sensitive control problem.
From \cite{Benes}, existence of a measurable minimizing selector in (\ref{discount_hjb})
follows.
\begin{theorem}\label{Thm_discounted_rep} 
 Assume (A1) and (A2). Then equation (\ref{discount_hjb}) has a unique solution 
 $u_{\alpha} \in  W^{1,2,p}_{loc}((0, \, 1) \times \overline{D} ), \, p \geq 2$, 
given by
 \[
u_{\alpha}(\theta, x) \ = \ \inf_{v(\cdot) \in {\mathcal A}} E^v_x \Big[ 
e^{\theta \int^{\infty}_0 e^{- \alpha s}r(X_s, v_s) ds } \Big] .
\]
 Moreover if $v_{\alpha}(\cdot)$ is a minimizing selector in (\ref{discount_hjb}), then $v_{\alpha}(\cdot)$ is optimal for the $\alpha$-discounted 
risk-sensitive  control problem. 
\end{theorem}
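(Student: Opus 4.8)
The plan is to establish the stochastic representation by a verification argument based on the It\^o--Krylov formula, and then to read off both uniqueness and optimality from it. Fix $\theta \in (0,1)$, $x \in \overline{D}$, an arbitrary admissible control $v(\cdot)$, and the associated solution pair $(X(\cdot), \xi(\cdot))$ of (\ref{state}). Writing $\theta_t = \theta e^{-\alpha t}$, I would apply the generalized It\^o formula (valid for $W^{1,2,p}_{loc}$ functions with $p$ large, which is legitimate since $u_{\alpha}$ is a strong solution of a nondegenerate parabolic equation) to the process
\[
Y_t \ = \ e^{\theta \int_0^t e^{-\alpha s} r(X_s, v_s)\, ds}\, u_{\alpha}(\theta_t, X_t),
\]
exactly as in the proof of Lemma \ref{lemma_estimate}. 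Note that for finite $t$ the argument $\theta_s$ stays in $(0,\theta]\subset(0,1)$, so the singularity at $\theta=0$ is never touched. The reflection term contributes $-(\gamma \cdot \nabla u_{\alpha})(X_t)\, d\xi_t$, which vanishes because $\nabla u_{\alpha} \cdot \gamma = 0$ on $\partial D$ while $\xi$ increases only on $\partial D$; hence no boundary local-time term survives. Collecting the drift and using that $u_{\alpha}$ solves (\ref{discount_hjb}) at the argument $\theta_t$, the finite-variation part of $Y_t$ equals
\[
e^{\theta\int_0^t e^{-\alpha s} r\, ds}\Big[\, b(X_t, v_t)\cdot \nabla u_{\alpha} + \theta_t\, r(X_t, v_t)\, u_{\alpha} - \inf_{v\in U}\big( b(X_t, v)\cdot \nabla u_{\alpha} + \theta_t\, r(X_t, v)\, u_{\alpha}\big)\Big]\, dt \ \geq \ 0,
\]
so $Y_t$ is a submartingale for every admissible $v(\cdot)$, and a genuine martingale when $v(\cdot)$ is a minimizing selector $v_{\alpha}(\cdot)$ in (\ref{discount_hjb}).

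For the upper bound, the submartingale property gives $u_{\alpha}(\theta, x) = E^v_x[Y_0] \leq E^v_x[Y_t]$ for all $t$. Letting $t \to \infty$, I would use $\theta_t \to 0$ together with the uniform Lipschitz-in-$\theta$ bound (\ref{bound2}) (which passes to the limit) to get $|u_{\alpha}(\theta_t, X_t) - u_{\alpha}(0, X_t)| \leq C\theta_t \to 0$ and $u_{\alpha}(0,\cdot) \equiv 1$, whence $Y_t \to e^{\theta \int_0^\infty e^{-\alpha s} r(X_s, v_s)\, ds}$ almost surely; since $0 \le Y_t \le e^{2\theta\|r\|_{\infty}/\alpha}$ is bounded by (\ref{bound1}), dominated convergence yields
\[
u_{\alpha}(\theta, x) \ \leq \ E^v_x\Big[ e^{\theta \int_0^\infty e^{-\alpha s} r(X_s, v_s)\, ds}\Big].
\]
Taking the infimum over $v(\cdot) \in \mathcal{A}$ gives one inequality. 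Running the same computation with $v_{\alpha}(\cdot)$, where $Y_t$ is a martingale, produces equality in the display above with $v = v_{\alpha}$, hence
\[
u_{\alpha}(\theta, x) \ = \ E^{v_{\alpha}}_x\Big[ e^{\theta \int_0^\infty e^{-\alpha s} r\, ds}\Big] \ \geq \ \inf_{v(\cdot)\in\mathcal{A}} E^v_x\Big[ e^{\theta \int_0^\infty e^{-\alpha s} r\, ds}\Big].
\]
The two inequalities combine to the claimed representation, and they simultaneously exhibit $v_{\alpha}(\cdot)$ as an admissible control attaining the infimum, that is, an optimal control for the $\alpha$-discounted problem.

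Uniqueness then comes essentially for free. The verification argument above uses only that the function is a bounded solution of (\ref{discount_hjb}) in $W^{1,2,p}_{loc}$ with $u_{\alpha}(0,\cdot)=1$: the submartingale step shows any such solution is $\le$ the value function, and applying the martingale step to its own minimizing selector (which exists by \cite{Benes}) shows it is $\ge$ the value function. Therefore every solution coincides with $\inf_{v} E^v_x\big[e^{\theta\int_0^\infty e^{-\alpha s} r\, ds}\big]$, a single well-defined object, and the solution is unique.

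I expect the main obstacle to be the rigorous justification of the It\^o--Krylov formula in this constrained setting: $u_{\alpha}$ is only a Sobolev (not $C^2$) solution, the dynamics are reflected at the non-smooth boundary $\partial D$, and one must argue that the boundary contribution genuinely drops out and that the $t\to\infty$ limit, together with the interchange of the infimum and the expectation, is legitimate. This is most safely handled by first carrying out the computation on the smooth bounded domains $D_{lm}$ for the approximate solutions $u^{\kappa}_{\alpha,lm}$, where (\ref{representation1}) already furnishes the finite-horizon representation, and then passing to the limit in $l, m, \kappa$ using the estimates (\ref{bound1})--(\ref{bound2}) and the convergence established in the proof of Theorem \ref{Thm_discount_hjb}.
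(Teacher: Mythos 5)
Your proposal is correct in substance, but it takes a route that only half-coincides with the paper's. For the lower bound and the optimality of the minimizing selector you do exactly what the paper does: apply the It\^o--Dynkin formula to $e^{\theta\int_0^t e^{-\alpha s}r\,ds}\,u_{\alpha}(\theta_t,X_t)$ under $v_{\alpha}(\cdot)$, kill the boundary term via $\nabla u_{\alpha}\cdot\gamma=0$ on $\partial D$, localize with exit times $\tau_k$ from balls (justified by local boundedness of $\nabla u_{\alpha}$ from Sobolev embedding), and pass $k\to\infty$, $T\to\infty$ --- the paper uses $u_{\alpha}\geq 1$ plus monotone convergence where you use the Lipschitz-in-$\theta$ bound (\ref{bound2}) and $u_{\alpha}(0,\cdot)=1$; both work. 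The genuine divergence is in the upper bound: you run a submartingale verification argument with $u_{\alpha}$ itself under an \emph{arbitrary} admissible control, which requires the It\^o--Krylov formula for a $W^{1,2,p}_{loc}$ function along a \emph{reflected} diffusion driven by a general non-Markov admissible control --- precisely the delicate point, since Krylov-type estimates for constrained controlled diffusions are neither proved nor cited in the paper. The paper avoids this entirely: it obtains the upper bound by passing to the limit $\kappa\to 0$ (so $T_{\kappa}\to\infty$) in the finite-horizon stochastic representation (\ref{representation1}) of the smooth approximants $u^{\kappa}_{\alpha}$, where classical It\^o's formula suffices because $u^{\kappa}_{\alpha,lm}\in H^{\frac{3}{2},3}$, and then invokes pointwise convergence and dominated convergence. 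You correctly identified this obstacle yourself, and your proposed fallback --- doing the computation on $D_{lm}$ for $u^{\kappa}_{\alpha,lm}$ and passing to the limit using (\ref{bound1})--(\ref{bound2}) --- is exactly the paper's argument, so the gap is flagged and repairable rather than fatal. One thing your route buys that the paper leaves terse: your uniqueness argument (any bounded $W^{1,2,p}_{loc}$ solution with the same data is squeezed between the value function from both sides, using its own Bene\v{s} selector) makes explicit the logic that the paper compresses into the closing assertion that the representation forces uniqueness; conversely, the paper's hybrid structure buys rigor, needing the nonsmooth It\^o formula only for the time-dependent Markov control $v_{\alpha}(\theta_t, X_t)$, where the solution process is a Markov reflected diffusion.
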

\begin{proof} From the proof of Theorem \ref{Thm_discount_hjb} it is clear that for fixed $\theta>0$, 
$\bar u^{\kappa}_{\alpha}(\theta,x)= u^{\kappa}_{\alpha}(\theta,x)$ for sufficiently small $\kappa$. 
Mimicking the arguments used to prove (\ref{representation1}), we have the following stochastic representation 
\begin{equation*}
 u^{\kappa}_{\alpha}(\theta, x) = \inf_{v(\cdot)} E^v_x\Big[ e^{\frac{\kappa \|r\|_{\infty}}{\alpha}}
e^{\int_{0}^{T_{\kappa}} \theta e^{-\alpha s} r(X_s,v_s) ds} \Big] \ ,
\end{equation*}
where $X(\cdot)$ is the process (\ref{state}) corresponding to an admissible control $v(\cdot)$. 
Since $u^{\kappa}_{\alpha}(\theta, x) \longrightarrow u_{\alpha}(\theta, x)$ pointwise and 
$T_{\kappa} \to \infty$ as $\kappa\longrightarrow 0$ along a subsequence, using 
dominated convergence theorem, we get
$$
u_{\alpha}(\theta, x) \leq E^v_x \Big[ e^{\int_{0}^{\infty} \theta e^{-\alpha s} r(X_s,v_s) ds}\Big].
$$
Since $v(\cdot)$ is an arbitrary admissible control, we have 
$$
u_{\alpha}(\theta, x) \leq \ \inf_{v(\cdot) }
E^v_x\Big[ e^{\int_{0}^{\infty} \theta e^{-\alpha s} r(X_s,v_s) ds} \Big].
$$
In particular we get
$$
u_{\alpha}(\theta, x) \leq E^v_x \Big[ e^{\int_{0}^{\infty} \theta e^{-\alpha s} r(X_s,v_{\alpha}(X_s)) ds} \Big],
$$
where $v_{\alpha}(\cdot)$ is a minimizing selector in (\ref{discount_hjb}).
To prove other way inequality we argue as follows.
The non-negativity of the function $r$ implies $u^{\kappa}_{\alpha}(\theta, x)\geq 1$ and hence $u_{\alpha}(\theta, x)\geq 1$.
Consider the following s.d.e. 
\begin{eqnarray}\label{v_alpha}
\left.
\begin{array}{rcl}
\displaystyle{
 dX(t)} &=& \displaystyle{ b(X_t,v_{\alpha}(X_t))dt+\sigma(X_t) dW(t) - \gamma(X_t)d\xi(t) } \\
\displaystyle{ d\xi(t) }&=&  \displaystyle{  I_{\{X_t\in \partial D\}}d\xi(t)  } \\
\displaystyle{ \xi(0) } &=& \displaystyle{  0,\ \ \ \ X(0)= x\in \overline {D}. } 
\end{array}
\right\}
\end{eqnarray}
Define a sequence of stopping times as follows: 
\begin{equation*}
\tau_k=\left\{ \begin{array}{ll}
0 \ ;& \mbox{ $|x|\geq k$, } \\  
 \displaystyle{ \inf\{ t\geq 0 :  |X(t)| \geq k \} \ ; } & \mbox{ $ |x|< k$, }
\end{array}\right.
\end{equation*}
where $X(\cdot)$ is the process given by (\ref{v_alpha}) and we use the convention that infimum of an 
empty set is $+\infty$.
The resulting sequence is nondecreasing with $ \displaystyle{ \lim_{k \rightarrow \infty} \tau_k =\infty }$, a.s.
Apply Ito-Dynkin formula to $e^{\int_{0}^{t} \theta_s r(X_s,v_{\alpha}(X_s) ds}u_{\alpha}(\theta_t, X_t)$, 
we get
\begin{eqnarray*}
&& e^{\int_{0}^{T\wedge \tau_k} \theta_s r(X_s,v_{\alpha}(X_s)) ds} u_{\alpha}(\theta_{T\wedge \tau_k} , X_{T\wedge \tau_k}) \\
 &=& u_{\alpha}(\theta , x) +\int_{0}^{T \wedge \tau_k } e^{\int_{0}^{t} \theta_s r(X_s,v_{\alpha}(X_s)) ds} du_{\alpha}(\theta_t , X_t) \\ 
&&+ \int_{0}^{T\wedge \tau_k} u_{\alpha}(\theta_t , X_t) e^{\int_{0}^{t} \theta_s r(X_s,v_{\alpha}(X_s)) ds}  
\theta_t r(X_t,v_{\alpha}(t)) dt,
\end{eqnarray*}
where
\begin{eqnarray*}
du_{\alpha}(\theta_t , X_t) &=& (\nabla u_{\alpha}(\theta_t, X_t))^\bot 
\sigma(X_t)  I\{ X_t \in \partial D \} dW(t) 
- \alpha \theta_t \frac{\partial}{\partial \theta} u_{\alpha}(\theta_t , X_t) dt \\ 
&&+ \Big[ \nabla  u_{\alpha}\left(\theta_t, X_t\right) \cdot b(X_t,v_{\alpha}(X_t)) + \frac{1}{2} \mbox{trace} (a(X_t) 
\nabla^2 u_{\alpha}(\theta_t, X_t) ) \Big] dt  \\
&&- \left[\gamma(X_t)\cdot \nabla u_{\alpha}
(\theta_t, X_t)\right] I_{\{X_t\in\partial D\}} d\xi_t.
\end{eqnarray*}
Using the fact that $u_{\alpha}$ satisfy the equation (\ref{discount_hjb}), we get
\begin{eqnarray*}
&& e^{\int_{0}^{T\wedge \tau_k} \theta_s r(X_s,v_{\alpha}(X_s)) ds} u_{\alpha}(\theta_{T\wedge \tau_k} , X_{T\wedge \tau_k}) \\
&=& u_{\alpha}(\theta , x) 
 +\int_{0}^{T\wedge \tau_k} e^{\int_{0}^{t} \theta_s r(X_s,v_{\alpha}(X_s)) ds}(\nabla u_{\alpha}(\theta_t, X_t))^\bot 
\sigma(X_t) dW(t).
\end{eqnarray*}
Since $\nabla u_{\alpha}$ is continuous on $\overline{{B_k} \cap D}$  by the Sobolev embedding Theorem, therefore  $\nabla u_{\alpha}$ is bounded on $\overline{{B_k} \cap D}$, which implies that the stochastic integral 
$$
 \displaystyle{ \int_{0}^{T\wedge \tau_k} e^{\int_{0}^{t} \theta_s r(X_s,v_{\alpha}(X_s)) ds}(\nabla u_{\alpha}(\theta_t, X_t))^\bot 
\sigma(X_t) dW(t) } $$ is a zero mean martingale for each $k$.
Hence we get
\begin{equation*}
 u_{\alpha}(\theta, x) = E^v_x\left[ e^{\int_{0}^{T\wedge \tau_k}
 \theta_s r(X_s,v_{\alpha}(X_s)) ds} u_{\alpha}(\theta_{T\wedge \tau_k} , X_{T\wedge \tau_k})\right].
\end{equation*}
Letting $k\rightarrow \infty$, we get 
\begin{equation*}
 u_{\alpha}(\theta, x) = E^v_x\left[ e^{\int_{0}^{T} \theta_s r(X_s,v_{\alpha}(X_s)) ds} 
 u_{\alpha}(\theta_{T} , X_T)\right]\geq E^v_x\left[ e^{\int_{0}^{T} \theta_s r(X_s,v_{\alpha}(X_s)) ds} \right].
\end{equation*}
Now  taking $T\rightarrow \infty$, we obtain 
\begin{equation*}
 u_{\alpha}(\theta, x) \geq E^v_x\Big[ e^{ \int_{0}^{\infty} \theta e^{-\alpha s} r(X_s,v_{\alpha}(X_s)) ds } \Big] \ .
\end{equation*}
Thus,
\[
u_{\alpha}(\theta, x) \ = \ \inf_{v(\cdot) \in {\mathcal A}} E^v_x \Big[ 
e^{\theta \int^{\infty}_0 e^{- \alpha s}r(X_s, v_s) ds } \Big]
\ = \  E^v_x \Big[ e^{\theta \int^{\infty}_0 e^{- \alpha s}r(X_s, v_{\alpha}(X_s)) ds} 
 \Big] \, .
\]
This proves $v_{\alpha}(\cdot)$ is
optimal and $u_{\alpha}$ is the unique solution to the equation (\ref{discount_hjb}), which completes the proof. 
\end{proof}

\section{Risk-sensitive Control with Near Monotone Cost}
\thispagestyle{empty}
In this section we prove existence of optimal control for the risk-sensitive control problem 
described in Section \ref{PD}, under a condition on the cost function $r(x,v)$, called
``near monotonicity". We also use an additional assumption that the process given by (\ref{state})
is recurrent for each admissible control.
If $C\subset \overline{D}$, we denote by $\tau(C)$ the first exit time of the 
process $X(\cdot)$ from $C$,
$$
\tau(C)=\inf \{ t>0: X(t) \notin C\}.
$$
\begin{definitions}
Let $X(\cdot)$ be the process given by (\ref{state}) corresponding to an admissible  control $v(\cdot)$ with initial condition $x$. We say controlled process $X(\cdot)$  is {\it  recurrent}, if for any open connected set $O\subset \overline{D}$ the first
hitting time of the set $O$, i.e., $\tau(O^c)$, satisfies
 $P ( \tau(O^c)]<\infty ) = 1$, for all $x\in\overline {D}$. If 
$E [\tau(O^c)] < \infty$ for all $x \in \overline{D}$, then $X$ is said to be positive recurrent. 
Correspondingly, the control $v(\cdot)$ is called a stable  control.  
We denote the set of stable, stationary Markov controls by $\mathcal M_{s}$.
\end{definitions}

We assume that for some admissible control $v(\cdot)\in \mathcal A$ and initial condition $x\in\overline{D} $,
$$
 \limsup_{T \longrightarrow \infty}
\frac{1}{T\theta}\ln E^v_x\left[ e^{\theta\int_{0}^T r(X_t,v_t) dt}\right]  < \infty,
 $$
 where $X(\cdot)$ is the process (\ref{state}) corresponding to $v(\cdot)$. Define the optimal  risk-sensitive values as follows
\begin{eqnarray*}
\beta & = & \inf_{v(\cdot)\in\mathcal A } \limsup_{T \longrightarrow \infty}
\frac{1}{T\theta}\ln E^v_x\left[ e^{\theta\int_{0}^T r(X_t,v_t) dt}\right] .
\end{eqnarray*}

Now we state the near-monotonicity assumption. \\
{\bf (A3)}\ The cost function $r$ satisfy the following
\begin{equation}\label{near-monotone}
\liminf_{|x|\longrightarrow \infty} \inf_{v\in\mathcal V} r(x,v) > \beta, \ x\in \overline {D}, 
\end{equation}
i.e., $r$ is near monotone with respect to $\beta$. 

Also we use the following recurrent condition. 

\noindent {\bf (A4)} For each stationary Markov control $v(\cdot)$, the corresponding the process $X(\cdot)$ 
given by (\ref{state}) is recurrent.   

See Lemma 1.3 for  a characterization of (A4).

\begin{remark}
(i) Note that if $r$ is bounded then $\beta \leq \|r\|_{\infty}$.\\
(ii)It may seem at first that (\ref{near-monotone}) cannot be verified unless $\beta$ is known. However there are two important 
cases where (\ref{near-monotone}) always holds. The first is the case where $ \displaystyle{ \inf_{v\in \mathcal A 
 } 
r(x,v) } $ grows
asymptotically unbounded in $x$, and $\beta<\infty$. The second covers problems in which $r(x,v)=r(x)$ does not depend on
$v$ and $\displaystyle{ r(x) < \lim_{ |y| \rightarrow \infty } r(y) } $ for all $x\in \overline{D}$.

\end{remark}

We adapt the vanishing discount approach to prove the existence of optimal risk-sensitive ergodic control 
under the near-monotonicity assumption. To prove existence of solution for risk-sensitive ergodic HJB, we study the limiting behaviour of the equation (\ref{discount_hjb})
 as $\alpha \longrightarrow 0$.

\begin{theorem}\label{rho < beta}
Assume (A1) and (A2).  Then there exist a solution 
$(\rho, \hat u) \in \mathbb{R} \times W^{2,p}_{loc} (\overline{D })$  to the equation  
\begin{equation}
\left.
\begin{array}{rcl}\label{ergodic}
 \theta \rho \hat u &=& \displaystyle{ \inf_{v \in U} \left[ b(x,v)\cdot  \nabla \hat u + 
\theta r(x,v)\hat u \right] +\frac{1}{2}\, trace (a(x)\nabla^2 \hat u) } \\
\displaystyle{ \nabla \hat u. \, \gamma } &=& 0 \mbox{ on } \partial D, \ 
 \hat u(x_0)=1 \, .
\end{array}
\right\}
\end{equation}
Moreover $$
\rho\leq \beta.
$$
\end{theorem}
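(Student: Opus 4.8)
The plan is to obtain $(\rho,\hat u)$ as a limit of the discounted value functions as the discount parameter $\alpha\to 0$, using the vanishing discount method that the paper announces. Concretely, for each $\alpha>0$ I would take the solution $u_\alpha$ of the discounted HJB equation (\ref{discount_hjb}) furnished by Theorem \ref{Thm_discounted_rep}, evaluate at a fixed risk-aversion level (say $\theta$ fixed in $(0,1)$), and set $\phi_\alpha(\theta,x)=\frac{1}{\theta}\ln u_\alpha(\theta,x)$. The natural normalization is to subtract the value at the reference point $x_0$: define $\psi_\alpha(x)=\phi_\alpha(\theta,x)-\phi_\alpha(\theta,x_0)$, equivalently work with the normalized function $\hat u_\alpha(x)=u_\alpha(\theta,x)/u_\alpha(\theta,x_0)$ so that $\hat u_\alpha(x_0)=1$. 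The candidate eigenvalue should emerge as $\rho=\lim_{\alpha\to 0}\alpha\,\phi_\alpha(\theta,x_0)$ (along a subsequence), which is the standard scaling in risk-sensitive vanishing-discount arguments.

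Next I would rewrite the discounted HJB equation in terms of the logarithmic transform $\phi_\alpha$ to expose the ergodic structure. Differentiating $u_\alpha=e^{\theta\phi_\alpha}$ and substituting into (\ref{discount_hjb}) produces an equation for $\phi_\alpha$ whose leading term $\alpha\theta\,\partial_\theta(\cdot)$ and the discount contribution behave, after multiplying through by $e^{-\theta\phi_\alpha}$, like $\theta\,\alpha\phi_\alpha(\theta,x_0)\,\hat u_\alpha$ in the limit, matching the left-hand side $\theta\rho\hat u$ of (\ref{ergodic}). The key analytic input is a pair of uniform-in-$\alpha$ estimates: first, a uniform bound on $\alpha\phi_\alpha(\theta,x_0)$ (equivalently on $\alpha\ln u_\alpha(\theta,x_0)$), which follows from the assumed finiteness of the risk-sensitive cost for some admissible control together with the representation in Theorem \ref{Thm_discounted_rep}; and second, interior and up-to-boundary $W^{2,p}_{loc}$ estimates for $\hat u_\alpha$, uniform in $\alpha$ on compact subsets of $\overline D$. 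These come from applying the elliptic regularity theory (Theorem 9.11 of \cite{gilbarg_trudinger}, exactly as in the proof of Theorem \ref{Thm_discount_hjb}) to the equation satisfied by $\hat u_\alpha$, once the oscillation of $\hat u_\alpha$ is controlled on compacts via a Harnack inequality for the associated linear operator with the oblique derivative boundary condition.

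With these estimates, a diagonalization argument over an exhausting sequence of smooth compact subdomains of $\overline D$ extracts a subsequence along which $\alpha\phi_\alpha(\theta,x_0)\to\rho$ and $\hat u_\alpha\to\hat u$ weakly in $W^{2,p}_{loc}(\overline D)$ and strongly in $C^{1}_{loc}$, so that $\hat u\in W^{2,p}_{loc}(\overline D)$, $\hat u(x_0)=1$, and $\hat u\ge 0$. Passing to the limit in the transformed equation—using stability of the infimum over $v\in U$ under this convergence, just as (\ref{nonlinear}) is handled in Theorem \ref{Thm_discount_hjb}—yields (\ref{ergodic}), and the boundary condition $\nabla\hat u\cdot\gamma=0$ on $\partial D$ survives the limit exactly as in the proof of Theorem \ref{Thm_discount_hjb} via $C^{1,\hat\alpha}$ convergence near the boundary. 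The bound $\rho\le\beta$ I would obtain separately: using the representation $u_\alpha(\theta,x)=\inf_v E^v_x[\exp(\theta\int_0^\infty e^{-\alpha s}r\,ds)]$ and the definition of $\beta$, one estimates $\alpha\ln u_\alpha(\theta,x_0)$ from above by comparing the discounted exponential cost with the ergodic cost of a near-optimal control and letting $\alpha\to 0$, giving $\rho\le\beta$.

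The main obstacle I expect is establishing the uniform-in-$\alpha$ regularity and, in particular, the Harnack-type control on $\hat u_\alpha$ on compact sets including the boundary portion $\partial D$: the orthant is nonsmooth, the operator carries an oblique (conormal) boundary condition, and the nonlinearity inside the infimum must be handled so that the a priori $W^{2,p}_{loc}$ bounds genuinely do not degenerate as $\alpha\to 0$. Controlling the normalization $\alpha\phi_\alpha(\theta,x_0)$ and ruling out that $\hat u_\alpha$ degenerates (blows up or vanishes) on compacts—so that the limit $\hat u$ is a genuine nonnegative eigenfunction rather than $0$—is where the near-monotonicity and blanket recurrence assumptions (A3) and (A4) must be invoked, and this is the delicate heart of the argument.
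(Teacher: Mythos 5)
Your vanishing-discount scaffolding is the right family of techniques, but it misses the one device the paper's proof actually turns on: \emph{truncation of the running cost}. The paper does not pass $\alpha \to 0$ directly on $u_\alpha$; it first replaces $r$ by $r_k = r\chi_k$, supported in $B_{k+1}$, solves the corresponding discounted HJB for $u^k_\alpha$, and exploits the fact that $r_k \equiv 0$ outside $B_{k+1}$: by the dynamic programming principle, for $x \in B^c_{k+1}$ one gets $u^k_\alpha(\theta,x) \le \sup_{y \in \partial B_{k+1}} u^k_\alpha(\theta,y)$, which combined with Harnack's inequality yields a \emph{global} bound $u^k_\alpha \le K(k)$, uniform in $\alpha$ and in $x$ over the whole unbounded orthant. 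That global (not merely local) upper bound, together with $u^k_\alpha \ge 1$, is exactly what makes the Dynkin--Fatou step work: $\hat u^k(x) \le K(k)\, E^v_x\bigl[ e^{\int_0^{T\wedge\tau_R} (r_k(X_s,v_s)-\rho_k)\,ds} \bigr]$ for every $R$ and every admissible $v$, whence $\rho_k \le \beta$ for all $k$; only then does the paper let $k \to \infty$ to obtain $(\rho,\hat u)$ with $\rho \le \beta$. In your direct scheme with the untruncated $r$, the normalized $\hat u_\alpha$ is controlled only on compacts by Harnack chains, the limit $\hat u$ need not be bounded above on $\overline D$, and your proposed derivation of $\rho \le \beta$ (an Abelian comparison of $\alpha \ln u_\alpha(\theta,x_0)$ with the ergodic cost of a near-optimal control) does not follow from the representation: the discount sits \emph{inside} the exponent as $e^{-\alpha t} r$, and no Tauberian inequality linking $\lim_{\alpha\to 0}\alpha\phi_\alpha$ to $\beta$ is available without a global bound on the eigenfunction along the lines above.

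A second, telling symptom: you say that ruling out degeneration of $\hat u_\alpha$ is where (A3) and (A4) ``must be invoked'' --- but the theorem assumes only (A1) and (A2), and the paper's proof uses neither (A3) nor (A4); those hypotheses enter only in Theorem 3.2 for the reverse inequality $\beta \le \rho$. That your plan needs assumptions the statement does not grant is precisely the signature of the missing truncation device: with $r_k$ compactly supported, nondegeneracy of $\hat u^k$ comes for free from $u^k_\alpha \ge 1$ plus locally uniform convergence, with no recurrence or near-monotonicity needed. Two smaller gaps: the eigenvalue in the paper is the limit of $g^k_\alpha = \alpha\phi^k_\alpha + \alpha\theta\,\frac{\partial \phi^k_\alpha}{\partial\theta}$, not of $\alpha\phi_\alpha(\theta,x_0)$ alone, and proving that this limit is a genuine constant (independent of $x$, via weak* limits in $L^\infty$ and distributional-derivative arguments showing $\nabla \rho^k_1 = 0$, then the same in $\theta$) is a step your sketch waves at rather than performs; and after the $\alpha \to 0$ limit one still needs uniform-in-$k$ Harnack bounds and positive lower bounds on compacts to pass $k \to \infty$ in the truncated ergodic equation --- a whole second limiting layer absent from your plan.
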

\begin{proof} Let $\chi_k$ denote a nonnegative smooth function such that $\chi_k \equiv 1 $ in $B_k$,
$\chi_k \equiv 0$ in $B^c_{k+1}$ and $0\leq \chi_k \leq 1$. Let $r_k=r\chi_k$. Then 
$$
\|r_k\|_{\infty} \leq \| r\|_{\infty, B_{k+1}}.
$$ 
Define for $\alpha >0$ 
\begin{equation}\label{value_k_alpha}
u^k_{\alpha}(\theta,x):=\inf_{v(\cdot)\in \mathcal M_{sr} } E^v_x\left[ e^{\theta\int_{0}^{\infty} e^{-\alpha t} r_k(X_t,v_t) dt} \Big| X_0 =x \right].  
\end{equation}
Consider the p.d.e.
\begin{equation}
\left.
\begin{array}{rcl}\label{monotone_k_alpha}
 \alpha \theta \frac{\partial u^k_{\alpha} }{\partial \theta} &=& \displaystyle{ \inf_{v \in U} \left[ b(x,v)\cdot  \nabla u^k_{\alpha} + \theta
 r_k(x,v) u^k_{\alpha} \right] +\frac{1}{2}\, trace (a(x)\nabla^2 u^k_{\alpha}) } \\
\displaystyle{ \nabla  u^k_{\alpha}\cdot \gamma(x) } &=& 0 \mbox{ on } \partial D, \ 
 u^k_{\alpha}(0, x)=1 \, .
\end{array}
\right\}
\end{equation}   
Mimicking the arguments as in Theorem \ref{Thm_discount_hjb} and \ref{Thm_discounted_rep}, one can see that p.d.e. (\ref{monotone_k_alpha}) has a solution $u^k_{\alpha}$ in $W^{1,2,p}_{loc}((0,1)\times D)$ with $p\geq 2$, and $u^k_{\alpha}$ has the representation (\ref{value_k_alpha}). 

Set 
\begin{equation}\label{monotone_transformation}
\phi^k_{\alpha}(\theta,x)= \frac{1}{\theta} \ln u^k_{\alpha}(\theta,x), \ g^k_{\alpha}(\theta,x)=
 \alpha \phi^k_{\alpha}  + \alpha\theta\frac{\partial \phi^k_{\alpha}}{\partial \theta} 
\end{equation}
Mimicking the arguments as in [\cite{anup_borkar_suresh},  Lemma 2.1] we have 
\begin{equation}\label{monotone_alpha1}
\| \alpha \phi^k_{\alpha} \|_{\infty} + \left\|\alpha\theta\frac{\partial \phi^k_{\alpha}}{\partial \theta}\right\|_{\infty}
\leq 3 \|r_k\|_{\infty},  \ \forall\; 0<\alpha<1,\ 0<\theta \leq 1.
\end{equation}

Let $\tau$ denote the entrance time of the process (\ref{state}) to the set $B_{k+1}$ under  
the  admissible control
 $v(\cdot)\in \mathcal A$. Let $x\in B^c_{k+1}$. Then dynamic programming principle (\ref{DPP}) gives 
\begin{eqnarray*}
u^k_{\alpha}(\theta,x) &=&  \inf_{v(\cdot)} E^v_x\left[ e^{\theta\int_{0}^{\tau^{v(\cdot)}} e^{-\alpha t} \; r_k(X_t,v_t) dt}
u^k_{\alpha}\left(\theta e^{-\alpha \tau}, X(\tau^{v(\cdot)})\right) \right] \\
&=&  \inf_{v(\cdot)} E^v_x\left[ u^k_{\alpha}(\theta e^{-\alpha \tau},X_{\tau}) 
\right] \ \ (\because r_k\equiv 0 \mbox{ on } B^c_{k+1}) \\
&\leq& \inf_{v(\cdot)} E^v_x\left[ u^k_{\alpha}(\theta ,X_{\tau}) 
\right] \ \ ( \because e^{-\alpha \tau} < 1  \mbox{ a.s. and } 
u^k_{\alpha} \mbox{ is increasing in } \theta ) \\
&\leq& \sup_{y\in \partial B_{k+1}} u^k_{\alpha}(\theta,y) 
\end{eqnarray*}
Using Harnack's inequality, from [\cite{gilbarg_trudinger}, Theorem 8.20, pp. 199] 
  we have 
\begin{equation}\label{Harnack_extension}
\sup_{x\in B_{k+1}} u^k_{\alpha}(\theta,x) 
 \leq K_{4 \cdot 1}(k),
\end{equation}
where $K_{4 \cdot 1}(k)$ is independent of $\alpha$. 

Set
$$
\bar u^k_{\alpha}(\theta,x):=\frac{ u^k_{\alpha}(\theta,x)}{u^k_{\alpha}(\theta,x_0)} 
\ \mbox{ for some } x_0\in D.
$$
Then $\bar u^k_{\alpha}$ is solution to 
\begin{equation}
\left.
\begin{array}{rcl}\label{monotone}
0 &=& \displaystyle{ \inf_{v \in U} \left[ \mathcal L \bar u^k_{\alpha}(\theta,x,v) + \theta
 (r_k(x,v)-g^k_{\alpha}) \bar u^k_{\alpha} \right] } \\
\displaystyle{ \bar \nabla  u^k_{\alpha}\cdot \gamma(x) } &=& 0 \mbox{ on } \partial D, \ 
 \bar u^k_{\alpha}(\theta,x_0)=1 \, .
\end{array}
\right.
\end{equation}   
From (\ref{Harnack_extension}) it follows that
$$
\sup_{x\in B_{k+1}} \bar u^k_{\alpha}(\theta,x) \leq K_{4 \cdot 1}(k).
$$
But the foregoing arguments show that for $x\in B^c_{k+1}$,
\begin{eqnarray*}
 \bar u^k_{\alpha}(\theta,x) &\leq& \sup_{ y\in \partial B_{k+1} } 
\left[ \frac{u^k_{\alpha}(\theta,y)}{u^k_{\alpha}(\theta,x_0)}\right] \leq K_{4 \cdot 1}(k), 
\end{eqnarray*}
where $K_{4 \cdot 1}(k)$ can be chosen independent of $x, \alpha$. Now using [\cite{gilbarg_trudinger}, Theorem 9.11, pp. 235] we have for each $R<k+1$
\begin{equation}\label{wpbound}
\|\bar u^k_{\alpha}(\theta,\cdot)\|_{W^{2,p}(B_R)} \leq K_{4 \cdot 2}, 
\end{equation}
where $K_{4 \cdot 2}>0$ is independent of $\alpha >0$. Now
using compact and continuous Sobolev embedding theorem, for each fixed $\theta>0$, without loss of generality $\theta=1$, there exists $\hat u^k \in  W^{2,p}_{loc}(D)$  such that 
\begin{eqnarray*}
\bar u^k_{\alpha}(1,\cdot) &\longrightarrow& \hat u^k  \mbox{ strongly in }   W^{1,p}_{loc}
(D), \\
\bar u^k_{\alpha}(1,\cdot) &\longrightarrow& \hat u^k  \mbox{ weakly in }  W^{2,p}_{loc}(D), 
\end{eqnarray*}
along a subsequence as $\alpha \downarrow 0$. By Sobolev embedding theorem, the convergence is uniform on compact subsets of 
$\overline{D}$, hence we have $\hat u^k$ is bounded above by $K_{4 \cdot 1}(k)$.
Now we show that 
$$
g^k_{\alpha}(1,x)  \longrightarrow \rho_k \in \mathbb R.
$$
From (\ref{monotone_alpha1}), along a further subsequence,
\begin{equation}\label{rho1}
 \alpha \phi^k_{\alpha}(\theta,x) \longrightarrow \rho^k_1(\theta,x), \ \mbox{ in weak* topology of } L^{\infty}( (0,1)\times D ).
\end{equation}
We show that $\rho^k_1$ is a function of $\theta$ alone. From (\ref{wpbound})  and $$\displaystyle{ \frac{1}{\theta} \nabla \ln \bar u^k_{\alpha}(\theta,x)= \nabla \phi^k_{\alpha}(\theta,x) ,}$$ we have for any $R>0$ 
\begin{equation}\label{monotone_interior}
\|\nabla \phi^k_{\alpha}(\theta,\cdot)\|_{W^{1,p}(B_R)} \leq K_{4 \cdot 3}, 
\end{equation}
where $K_{4 \cdot 3}>0$ is independent of $\alpha >0$. By  (\ref{monotone_interior}),
$$
\lim_{\alpha \downarrow 0} \int_{D} \alpha \nabla \phi^k_{\alpha}(\theta,x) f(x)=0,
$$
for each $f\in C^{\infty}_c(D)$. 
Thus the distributional derivative of $\rho^k_1$ in $x$ is identically zero, proving the claim. Also by  
(\ref{monotone_alpha1}), for each fixed $\theta=\theta_0>0$, $\displaystyle{ \left\{
 \alpha \frac{\partial \phi_{\alpha}^k}{\partial \theta} \Big| \alpha>0 \right\} }$ is bounded in $ L^{\infty}( [\theta_0,1]
\times D)$. Hence along a further subsequence 
\begin{equation}\label{rho2}
 \alpha \frac{\partial \phi^k_{\alpha}}{\partial \theta} \longrightarrow \rho_2^k(\theta,x), 
\ \ \mbox{ weakly in } L^{2}_{ loc }( [\theta_0,1]\times D).
\end{equation}
It follows from (\ref{rho1}) and (\ref{rho2}) that $\rho_2^k(\cdot,\cdot)=(\rho^k_1)'$ in the sense of distribution, where $(\rho^k_1)'$
is the distributional derivative (in $\theta$) of $\rho_1^k$. Hence $\rho_2^k(\cdot,\cdot)$ is also a function of $\theta$ alone.
Thus we have: for each $\theta>0$ there exists a constant $\rho_k$ such that along a subsequence
$$
\alpha \phi_{\alpha}^k+ \alpha \theta\frac{\partial \phi_{\alpha}^k}{\partial \theta} \longrightarrow \rho_k.
$$
Now letting $\alpha \longrightarrow 0$ in (\ref{monotone}) along the subsequence, we have $(\hat{u}^k, \rho_k) \in  W^{2,p}_{loc}(D) \times \mathbb{R}$ satisfying the 
following equation 
\begin{equation}
\left.
\begin{array}{rcl}\label{monotone_ergodic}
 \rho_k \hat u_k &=& \displaystyle{ \inf_{v \in U} \left[ b(x,v)\cdot  \nabla \hat u_k + 
 r_k(x,v)\hat u_k \right] +\frac{1}{2}\, trace (a(x)\nabla^2 \hat u_k) } \\
\displaystyle{ \nabla \hat u_k\cdot \gamma(x) } &=& 0 \mbox{ on } \partial D, \ 
 \hat u_k(x_0)=1 \, .
\end{array}
\right\}
\end{equation}   
Applying Ito's formula to the process (\ref{state}) corresponding to $v(\cdot)\in \mathcal A$,
\begin{eqnarray*} 
 d\left(e^{\int_{0}^t (r_k(X_s,v_s)-\rho_k) ds} \ \hat u^k(X_t)\right) &=& e^{\int_{0}^t (r_k(X_s,v_s)-\rho_k) ds} \ 
d\Big( \hat u^k(X_t)
\Big) \\ 
&+& (r_k(X_t,v_t)-\rho_k) e^{\int_{0}^t (r_k(X_s,v_s)-\rho_k) ds} \ \hat u^k(X_t)\ dt,
\end{eqnarray*}
where
\begin{eqnarray*}
d\left( \hat u^k(X_t) \right) &=&  \left[ b(X_t, v_t))\cdot \nabla \hat u^k(X_t) 
+  \frac{1}{2}\mbox{trace}(a(X_t) \nabla^2 \hat u^k(X_t)) \right] I \{ X_t \in \partial D \} dt \\ &&- 
\left(\gamma(X_t)\cdot \nabla \hat u^k(X_t)\right) I_{\{X_t\in\partial D\}}  d\xi_t
+  (\nabla \hat u^k(X_t))\sigma(X_t) dW_t \ . 
\end{eqnarray*}
Hence it follows that
\begin{eqnarray}\label{monotone_binq}
e^{\int_{0}^{T\wedge\tau_R} (r_k(X_s,v_s)-\rho_k) ds} \hat u^k(X_{T\wedge\tau_R})&-&\hat u^k(x) \nonumber\\
&\geq&  
\int_{0}^{T\wedge\tau_R} e^{\int_{0}^t (r_k(X_s,v_s)-\rho_k) ds} \nabla \hat u^k(X_t)\sigma(X_t) dW_t. 
\end{eqnarray}
Since we have $ \hat u^k \in  W^{2,p}_{loc}(D), p \geq d $, 
which implies $\nabla \hat u^k$ is bounded on each compact subset $\overline{B}_R$, hence
$$
\int_{0}^{T\wedge\tau_R} e^{\int_{0}^t (r_k(X_s,v_s)-\rho_k) ds} (\nabla \hat u^k(X_t))\sigma(X_t) dW_t,
$$
is a zero mean martingale. Taking expectation in (\ref{monotone_binq}) we obtain
\begin{eqnarray*}
E^v_x\left[ e^{\int_{0}^{T\wedge\tau_R} (r_k(X_s,v_s)-\rho_k) ds} \hat u^k(X_{T\wedge\tau_R})\right] -\hat u^k(x) \geq 0.
\end{eqnarray*}
Since $\hat u^k$ is bounded above, we have
$$
\hat u^k(x) \leq K_{4 \cdot 1}(k) E^v_x\left[ e^{\int_{0}^{T\wedge\tau_R} (r_k(X_s,v_s)-\rho_k) ds}\right]
\leq K_{4 \cdot 1}(k) E^v_x\left[ e^{\int_{0}^{T} (r_k(X_s,v_s)-\rho_k) ds}\right] .
$$
Taking $\ln$ and divide by $T$ we get
$$
\frac{1}{T} \ln \hat u^k(x) \leq \frac{\ln K(k)}{T}+ \frac{1}{T}\ln E^v_x\left[ e^{\int_{0}^{T} (r_k(X_s,v_s)-\rho_k) ds}\right].
$$
Since $u^k_{\alpha}\geq 1$ by definition $\bar u^k_{\alpha}$ is bounded below, 
hence uniform convergence on compact sets gives that $\hat u^k$ is bounded below say by $K_{4 \cdot 3}(k)>0$ . 
Hence 
$$
\frac{1}{T} \ln K_{4 \cdot 3}(k) \leq \frac{\ln K(k)}{T}+ \frac{1}{T}\ln E^v_x\left[ e^{\int_{0}^{T} r_k(X_s,v_s ds} \right]  -\rho_k.
$$
Now taking $T \longrightarrow\infty$ we get
$$
\rho_k \leq \limsup_{T\longrightarrow\infty} \frac{1}{T}\ln E^v_x\left[ e^{\int_{0}^{T} r_k(X_s,v_s) ds}\right].
$$
Since $|r_k| \leq |r|$, 
$$
\rho_k \leq \limsup_{T\longrightarrow\infty} \frac{1}{T}\log E^v_x\left[ e^{\int_{0}^{T} r(X_s,v_s) ds}\right].
$$
Taking infimum over all stable stationary Markov controls in the right hand side of above, we get
\begin{equation}\label{k_buniform}
\rho_k \leq \beta, \  \forall\; k.
\end{equation}
Since the coefficients of (\ref{monotone_ergodic}) are bounded, we have $|\hat u^k| $ is bounded uniformly
in $k$ on compact sets by Harnack's inequality. Thus  we have 
$\hat u^k \longrightarrow \hat u$ in $W^{1,p}_{loc}(D)$ and $\rho_k\longrightarrow \rho$ along
a subsequence. Furthermore, it follows from Harnack's inequality that $\hat u >0$ on compacts, in fact one has uniform positive lower bounds for $\hat u^k$ on compacts. Letting $k\longrightarrow\infty$ in (\ref{monotone_ergodic}),  $(\rho,\hat u)$ satisfy 
\begin{equation}
\left.
\begin{array}{rcl}\label{monotone_ergodicfinal}
 \rho \hat u &=& \displaystyle{ \inf_{v \in U} \left[ b(x,v)\cdot  \nabla \hat u + 
 r(x,v)\hat u \right] +\frac{1}{2}\, trace (a(x)\nabla^2 \hat u) } \\
\displaystyle{ \nabla \hat u\cdot \gamma(x) } &=& 0 \mbox{ on } \partial D, \ 
 \hat u(x_0)=1 \, ,
\end{array}
\right\}
\end{equation} 
where for the boundary condition it is same argument as in Theorem \ref{Thm_discount_hjb}.
In view of (\ref{k_buniform}) it follows that $\rho \leq \beta$, which completes the proof. \end{proof}
\begin{theorem}\label{thm_near-monotone}
Assume (A1)-(A4). Then ergodic risk-sensitive HJB equation (\ref{ergodic}) has a solution $(\rho,\hat \phi)$ such 
that $\rho$ is unique and is characterized by $\rho=\beta$. Also, minimizing selector in (\ref{ergodic}) is an optimal control. 
\end{theorem}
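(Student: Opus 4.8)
The plan is to establish the three assertions---existence of an eigenpair solving (\ref{ergodic}), the characterization $\rho=\beta$ together with optimality of a minimizing selector, and uniqueness of $\rho$---by a verification argument built on the It\^o--Dynkin formula applied to the multiplicative functional $Y_t=e^{\theta\int_0^t(r(X_s,v_s)-\rho)\,ds}\,\hat\phi(X_t)$, where $(\rho,\hat\phi)$ is the solution produced by Theorem \ref{rho < beta}. Since that theorem already supplies a solution with $\rho\le\beta$, I would begin from there and concentrate on the remaining inequalities. Differentiating $Y_t$ and using $\nabla\hat\phi\cdot\gamma=0$ on $\partial D$ to kill the boundary (local-time) term, the drift of $Y_t$ equals $e^{\theta\int_0^t(r-\rho)}\big[\mathcal L\hat\phi(X_t,v_t)+\theta(r(X_t,v_t)-\rho)\hat\phi(X_t)\big]$, which by (\ref{ergodic}) is $\ge 0$ for every admissible control and $=0$ along a minimizing selector. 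Hence $Y_t$ is a $P^v$-submartingale for each admissible $v$ and a local martingale for the selector, and this dichotomy drives the whole argument.

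For optimality I would fix a minimizing selector $v^\ast$ of (\ref{ergodic}). Localizing with the exit times $\tau_R$ from $B_R$ and using that $\nabla\hat\phi$ is bounded on $\overline{B_R}$ (Sobolev embedding, as $\hat\phi\in W^{2,p}_{\rm loc}$), the stopped local martingale is a genuine martingale, so $\hat\phi(x)=E^{v^\ast}_x[e^{\theta\int_0^{T\wedge\tau_R}(r-\rho)}\hat\phi(X_{T\wedge\tau_R})]$. Since $\hat\phi$ is bounded below by a positive constant $m$ (Harnack gives uniform positive lower bounds on compacts, and near-monotonicity forces $\hat\phi$ to be large far out), recurrence (A4) gives $\tau_R\uparrow\infty$, and Fatou yields $\hat\phi(x)\ge m\,E^{v^\ast}_x[e^{\theta\int_0^{T}(r-\rho)}]$. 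Taking logarithms, dividing by $\theta T$ and letting $T\to\infty$ gives $\rho^{v^\ast}(\theta,x)\le\rho$. Combined with $\beta\le\rho^{v^\ast}$ and $\rho\le\beta$ from Theorem \ref{rho < beta}, the chain $\rho\le\beta\le\rho^{v^\ast}\le\rho$ collapses, so $\rho=\beta=\rho^{v^\ast}=\inf_v\rho^v$, which is exactly optimality of $v^\ast$.

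The remaining direction, a lower bound $\rho^v\ge\rho$ valid for \emph{every} admissible $v$, is what secures uniqueness, and it is the hard part. The submartingale property gives $\hat\phi(x)\le E^v_x[e^{\theta\int_0^T(r-\rho)}\hat\phi(X_T)]$, i.e. $e^{\theta\rho T}\hat\phi(x)\le E^v_x[e^{\theta\int_0^T r}\hat\phi(X_T)]$; the difficulty is that $\hat\phi(X_T)$ cannot be bounded uniformly, because $\hat\phi$ grows at infinity. Here I would invoke near-monotonicity (A3): choose $n$ and $\delta>0$ with $r(\cdot,\cdot)\ge\beta+\delta\ge\rho+\delta$ off $B_n$, split the expectation according to whether $X_T\in B_n$, bound $\hat\phi\le M_n$ on $B_n$, and absorb the exterior contribution by exploiting that along the terminal excursion outside $B_n$ the excess $r-\rho>\delta$ is strictly positive, so that term is dominated by the cost itself (mimicking the penalization estimate of \cite{anup_borkar_suresh}) together with recurrence. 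This yields $\hat\phi(x)\le M_n\,E^v_x[e^{\theta\int_0^T(r-\rho)}]$ up to an absorbable remainder, and hence $\rho^v\ge\rho$. Controlling this exterior term against the unbounded eigenfunction is precisely the main obstacle I expect to face.

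Finally, for uniqueness I would apply the two bounds to an arbitrary solution $(\rho',\hat\phi')$ of (\ref{ergodic}): the upper-bound argument applied to a minimizing selector of \emph{its own} equation gives $\beta\le\rho'$, while the lower-bound argument gives $\rho^v\ge\rho'$ for all $v$ and hence $\beta=\inf_v\rho^v\ge\rho'$. Therefore $\rho'=\beta$, so the eigenvalue is unique and equal to $\beta$, which completes the proof.
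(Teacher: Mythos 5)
Your skeleton for the identification and optimality part --- take $(\rho,\hat u)$ from Theorem \ref{rho < beta}, apply the It\^o--Dynkin formula to $e^{\theta\int_0^t(r-\rho)\,ds}\,\hat u(X_t)$ under a minimizing selector $v^*$ of (\ref{ergodic}), localize by $\tau_R$, use a positive lower bound on $\hat u$ together with Fatou to get $\rho^{v^*}\le\rho$, and close the chain $\rho\le\beta\le\rho^{v^*}\le\rho$ --- is exactly the paper's proof. But there is a genuine gap at the one step you wave through: the global bound $\hat u\ge m>0$. Harnack gives positive lower bounds only on compact sets, and your parenthetical claim that ``near-monotonicity forces $\hat\phi$ to be large far out'' is never established and does not follow from anything in your setup. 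This bound is precisely where (A4) enters the paper's proof, and it is obtained with the very machinery you already have, applied at the hitting time of the set $O$ on which (A3) and $\rho\le\beta$ give $\inf_v r(\cdot,v)\ge\beta\ge\rho$ off $O$: for $x\in O^c$, the martingale identity up to $\tau(O^c)\wedge T_R$ plus Fatou yields
\[
\hat u(x)\ \ge\ E^{v^*}_x\Big[e^{\int_0^{\tau(O^c)}(r(X_t,v^*_t)-\rho)\,dt}\,\hat u(X_{\tau(O^c)})\Big]\ \ge\ \nu\,P_x(\tau(O^c)<\infty)\ =\ \nu,
\]
the exponent being nonnegative on the excursion and the last equality using recurrence (A4). (Your attribution of $\tau_R\uparrow\infty$ to (A4) is also off --- that is just non-explosion; recurrence is needed only for $\tau(O^c)<\infty$.)

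Your third paragraph manufactures an obstacle that is not there. A bound $\rho^v\ge\rho$ for \emph{every} admissible $v$ is neither needed nor attempted in the paper: once $\rho^{v^*}\le\rho$ is proved and $\rho\le\beta$ is imported from Theorem \ref{rho < beta}, the inequality $\beta\le\rho^{v^*}$ holds trivially because $\beta$ is the infimum over admissible controls, the chain collapses to $\rho=\beta=\rho^{v^*}$, and then $\rho^v\ge\beta=\rho$ for all $v$ is automatic from the definition of $\beta$ --- no submartingale estimate against the unbounded eigenfunction is ever required. Relatedly, the uniqueness asserted in the theorem amounts to this identification of the constructed eigenvalue with $\beta$; the paper proves no more, and your paragraph-four program for an arbitrary solution $(\rho',\hat\phi')$ both rests on the incomplete exterior-excursion estimate and, in its ``upper-bound'' half, silently needs $r-\rho'\ge 0$ outside a compact set, i.e. $\rho'\le\beta$, which you do not have for an arbitrary solution. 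Replace the hand-waved lower bound by the hitting-time argument above and drop the extraneous direction, and your proof coincides with the paper's.
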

\begin{proof} In view of Theorem \ref{rho < beta} it remains to show $\beta \leq \rho$.
By assumption (\ref{near-monotone}) we have 
$$ 
\inf_{v} r(\cdot, v) \geq \beta > \rho \ \ \mbox{ outside } O \subset
\overline{D}. 
$$ 
We know that for some  $\nu>0$, $\hat u\geq \nu>0$ in $O$. Let $x\in O^c \cap \overline{D}$. 
Let $R>0$ be large enough so that $B_R$ contains the set $O$ and initial point $x$. Set $T_R = R \wedge 
\tau(B_R)$.
Let $v^*(\cdot)$ be  minimizing selector in (\ref{ergodic}), applying Dynkin's formula 

\begin{eqnarray*}
&& e^{\int_{0}^{\tau(O^c) \wedge  T_R} (r(X_s,v^*_s)-\rho) ds} \hat u(X_{\tau(O^c) \wedge  T_R})-\hat u(x) \\ & = & 
\int_{0}^{\tau(O^c) \wedge  T_R} e^{\int_{0}^t (r(X_s,v^*_s)-\rho) ds} (\nabla \hat u(X_t))^\bot
\sigma(X_t) dW_t.
\end{eqnarray*}

Since $\hat u \in  W^{2,p}_{loc}(\overline{D})$, 
it follows that $\nabla \hat u$ is locally bounded and using the boundedness of $r,\sigma$,  
$$
\int_{0}^{t \wedge \tau(O^c) \wedge  T_R}  e^{\int_{0}^{t'} (r(X_s,v^*_s)-\rho) ds} (\nabla \hat u(X_{t'}))\sigma(X_{t'}) dW_{t'}, $$ is zero mean martingale.
Then we have
\begin{eqnarray*}
E^v_x\left[ e^{\int_{0}^{\tau(O^c) \wedge T_R } (r(X_s,v^*_s)-\rho) ds} \hat u(X_{\tau(O^c) \wedge T_R) })\right] -\hat u(x) = 0 
\end{eqnarray*}
Using the Fatou's lemma, letting $R\longrightarrow\infty$ we get
$$
\hat u (x)\geq E^v_x \left[ e^{\int_{0}^{\tau(O^c)} (r(X_t, v^*_t) -\rho) dt} \hat u(X_{\tau(O^c)})\right]. 
$$
Using (A4), it follows that $\tau(O^c) < \infty$ a.s. Hence
$$
E^v_x \left[ e^{\int_{0}^{\tau(O^c)} (r(X_t, v^*_t) -\rho) dt} \hat u(X_{\tau(O^c)})\right] \geq \nu.
$$
This proves that $\hat u$ is bounded below by $\nu$. Repeating the previous argument, we also have for 
any $T>0$,
$$
\hat u (x)\geq E^v_x \left[ e^{\int_{0}^T (r(X_t, v^*_t) -\rho) dt} \hat u(X_T)\right] \geq \nu 
E^v_x \left[ e^{\int_{0}^T (r(X_t, v^*_t) -\rho) dt} \right] .
$$
Taking logarithm and dividing by $T$ 
$$
\frac{1}{T} \ln E^v_x \left[ e^{\int_{0}^T (r(X_t, v^*_t) -\rho) dt} \right] + \frac{1}{T} \ln \nu  \leq \frac{1}{T} \ln { \hat u (x) }
$$
Letting $T\longrightarrow\infty$ on both sides, we have 
$$
\limsup_{T \longrightarrow \infty} \frac{1}{T} \ln E^v_x \left[ e^{\int_{0}^T (r(X_t, v^*_t) -\rho) dt} \right]  \leq 0
$$
i.e.,
$$
\limsup_{T \longrightarrow \infty} \frac{1}{T} \ln E^v_x \left[ e^{\int_{0}^T r(X_t, v^*_t)  dt} \right]  \leq \rho.
$$
Thus $\beta \leq \rho$. 
This completes the proof of the theorem. \end{proof}

\addcontentsline{toc}{chapter}{Bibliography}
 
\end{document}